\newtheorem{theorem}{Theorem}[section]
\newtheorem{lemma}[theorem]{Lemma}
\newtheorem{corollary}[theorem]{Corollary}
\title{On $d$-panconnected tournaments with large semidegrees}
\begin{document}
\date{}
 \maketitle

\vspace{-2cm}

\begin{center}
\noindent\textbf{Samvel Kh. Darbinyan}\\

Institute for Informatics and Automation Problems of NAS RA

\noindent\textbf {Gregory Z. Gutin}\\

Department of Compute Science, Royal Holloway, University of London, UK

E-mails: samdarbin@iiap.sci.am, gutin@cs.rhul.ac.uk\\

\end{center}

\begin{abstract} We prove the following new results. 

(a) Let $T$ be a regular tournament of order $2n+1\geq 11$ and  $S$ a subset of $V(T)$. Suppose that $|S|\leq \frac{1}{2}(n-2)$ and $x$, $y$ are distinct vertices in $V(T)\setminus S$. If the subtournament $T-S$ contains an $(x,y)$-path of length $r$, where $3\leq r\leq |V(T)\setminus S|-2$, then  $T-S$ also contains an $(x,y)$-path of length $r+1$.

(b) Let $T$ be an $m$-irregular tournament of order $p$, i.e., $|d^+(x)-d^-(x)|\le m$ for every vertex $x$ of $T.$  If 
$m\leq \frac{1}{3}(p-5)$ (respectively,  $m\leq \frac{1}{5}(p-3)$), then for every pair of vertices $x$ and $y$,  $T$ has an $(x,y)$-path of any length $k$, $4\leq k\leq p-1$ (respectively,  $3\leq k\leq p-1$ or $T$ belongs to a family $\cal G$ of tournaments, which is defined in the paper).
In other words, (b) means that if the semidegrees of every vertex of a tournament $T$  of order $p$ are between
 $\frac{1}{3}(p+1)$ and $\frac{2}{3}(p-2)$ (respectively, between $\frac{1}{5}(2p-1)$ and  $\frac{1}{5}(3p-4)$), then the claims in (b) hold.
 
 Our results  improve in  a sense related results of Alspach (1967), Jacobsen (1972), Alspach et al. (1974), Thomassen (1978) and Darbinyan (1977, 1978, 1979), and are sharp in a sense.
 
 \textbf{Keywords:} tournaments; arc pancyclicity; irregularity; paths; panconnected tournaments; oudegree; indegree. \\
\end{abstract}

\section {Introduction} 
In this paper, we consider finite digraphs (directed graphs)  without loops and multiple arcs. We use standard notation and terminology, cf. \cite{[3]} and \cite{[4]}. The vertex set and the arc set of a digraph $D$ are    denoted
  by $V(D)$  and   $A(D)$, respectively.  The {\it order} of $D$ is the number
  of its vertices. A subdigraph of $D$ induced by a subset $A\subseteq  V(D)$ is denoted by $D\langle A\rangle$. If $X\subseteq V(D)$, then $D-X$ is the subdigraph induced by $V(D)\setminus X$, i.e., $D-X=D\langle V(D)\setminus X\rangle$. 
 Every cycle and path are assumed to be simple and directed.
 Let $m$ and $n$, $m\leq n$, are integers. By $[m,n]$ we denote the set $\{m, m+1,\ldots , n\}$.  
 
A digraph $D$ of order $p$ is {\it arc pancyclic} (respectively, {\it $d$-arc pancyclic}, where $d\in [3,p]$) if $D$ has a $k$-cycle containing $uv$ for every arc $uv\in A(D)$ and  every $k\in [3,p]$ (respectively, $k\in [d,p]$).
 We say that a digraph $D$ of order $p$ is {\it strongly panconnected} (respectively, {\it $d$-strongly panconnected}, where $d\in [3,p-1]$) if there is an $(x,y)$- and a $(y,x)$-path in $D$, both of length $k$, for any two vertices   $x$,  $y$  of $D$ and each $k\in [3,p-1]$ (respectively, $k\in [d,p-1]$).

An {\em oriented graph} is a digraph with no cycle of length two. A {\em tournament} is an oriented graph where every pair of distinct vertices are adjacent. The {\em outdegree} $d^+(x)$ (respectively, {\em indegree}  $d^-(x)$) of a vertex $x$ of a digraph $D$ is the number of vertices $y$ such that $xy\in A(D)$ (respectively, $yx\in A(D)$).
The {\it irregularity} $i(T)$ of a tournament $T$ is the maximum of $|d^+(x)-d^-(x)|$ over all vertices  $x$ of $T$. If $i(T)=0$, then $T$ is {\it regular} and if  $i(T)=1$, then $T$ is {\em almost regular}. If $i(T)= m$, $T$ is {\em $m$-irregular}. Observe that every vertex of a tournament $T$ of order $p$ has  outdegree between $\frac{1}{2}(p-1-i(T))$ and $\frac{1}{2}(p-1+i(T))$. The outdegree and indegree of $x$ are its {\em semidegrees}.
Further digraph terminology and notation are given in the next section. 

 There are a number of  conditions which guarantee that  a tournament is arc pancyclic or strongly panconnected (see, e.g., \cite{[1]}-\cite{[18]}).  In particular, Alspach \cite{[1]}  proved that every  regular tournament is arc pancyclic. Jacobsen \cite{[12]}  proved that every almost regular tournaments of order $p\geq 8$ is 4-arc pancyclic. Alspach et al. \cite{[2]}  proved that every regular tournament of order $p\geq 7$ is strongly panconnected. Darbinyan \cite{[9]} proved that every almost regular tournament of order $p\geq 10$ is strongly panconnected.

  Thomassen \cite{[17]} generalized these results
as follows:

\begin{theorem}\label{th1} Let $T$ be an $m$-irregular tournament of order $p$.  If 
 $m\leq \frac{1}{5}(p-9)$, then $T$ is strongly panconnected. If $m\leq \frac{1}{5}(p-3)$, then $T$ is 4-arc pancyclic.
 \end{theorem}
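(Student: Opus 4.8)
The plan is to derive both parts of Theorem~\ref{th1} from a single \emph{length-increment lemma} together with an easy \emph{base case}, and to put all the real effort into the increment lemma. Reversing every arc of an $m$-irregular tournament again gives an $m$-irregular tournament, so for strong panconnectedness it suffices to construct, for every ordered pair of distinct vertices $x,y$, an $(x,y)$-path of each length $k\in[3,p-1]$; and for $4$-arc pancyclicity it suffices to produce, for every arc $xy$, a $(y,x)$-path of each length $\ell\in[3,p-1]$, since deleting the arc $xy$ from a $k$-cycle through $xy$ leaves a $(y,x)$-path of length $k-1$ and this correspondence is reversible. Hence Theorem~\ref{th1} follows from (i) the base case that every ordered pair $x,y$ is joined by an $(x,y)$-path of length $3$ and every arc $xy$ lies on a $4$-cycle, and (ii) the increment lemma, that an $(x,y)$-path of length $r$ with $3\le r\le p-2$ can be lengthened to one of length $r+1$. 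Throughout, the hypothesis enters only through the semidegrees: writing $\delta^0=\delta^0(T)$ for the minimum over all vertices of $\min\{d^+,d^-\}$, one has $\delta^0\ge\frac12(p-1-m)$ and every semidegree is at most $p-1-\delta^0$, so $m\le\frac15(p-9)$ gives $\delta^0\ge\frac25(p+1)$ and $m\le\frac15(p-3)$ gives $\delta^0\ge\frac15(2p-1)$.

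For the base case I would argue by counting. To get an $(x,y)$-path of length $3$ it is enough to find an arc $ab$ with $a\in N^+(x)\setminus\{y\}$ and $b\in N^-(y)\setminus\{x\}$, where $N^+(x),N^-(y)$ are the out- and in-neighbourhoods. If no such arc existed, every arc between these two large sets would point from the second to the first, and this forces some vertex to have out-degree or in-degree exceeding the upper bound $p-1-\delta^0$, a contradiction once $\delta^0$ is as large as the hypothesis requires. The $4$-cycle through a given arc $xy$ is the same estimate applied to $N^+(y)$ and $N^-(x)$. These are the tightest inequalities in the whole argument, and it is essentially a more careful analysis here that lets one replace the constant $9$ in Theorem~\ref{th1}.

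The core is the increment lemma. Let $P=u_0u_1\cdots u_r$ be an $(x,y)$-path of length $r\le p-2$ and $W=V(T)\setminus V(P)$, so $|W|=p-r-1\ge1$. The first move is \emph{vertex insertion}: if some $w\in W$ satisfies $u_i\to w\to u_{i+1}$ for some $i\in[0,r-1]$, insert $w$ there and stop. Otherwise, for each $w\in W$ the implication ``$u_i\to w$ forces $u_{i+1}\to w$'' propagates along $P$, so $\{j:u_j\to w\}$ is a final segment $\{t(w),\dots,r\}$; equivalently $w$ dominates $u_0,\dots,u_{t(w)-1}$ and is dominated by $u_{t(w)},\dots,u_r$. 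In this \emph{obstructed} case the plan is to reroute through $W$: for instance, if $W$ contains $w_1$ with $t(w_1)=0$ and $w_2$ with $t(w_2)=r+1$ and $w_1\to w_2$, then $x\,w_1\,w_2\,u_2u_3\cdots u_r$ is an $(x,y)$-path of length $r+1$; more generally one deletes an interior vertex $u_i$ and detours $u_{i-1}\to w_1\to w_2\to u_{i+1}$ through $w_1,w_2\in W$ with $t(w_1)\le i-1$, $t(w_2)\ge i+2$, $w_1\to w_2$, a net change of $+1$ in length. The thresholds cannot all be $\ge1$ (else $W\subseteq N^-(x)$, contradicting $|W|\le p-1-\delta^0$ when $r<\delta^0$), nor all $\le r$ (dually at the $y$-end); and a vertex with an extreme threshold has one semidegree confined to $W$, which collides with $\delta^0$ when $r$ is large. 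Playing these ``small $r$'' and ``large $r$'' regimes against the existence of a suitable rerouting pair in $W$ is what should close the obstructed case.

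The main obstacle is precisely this obstructed case: the several counting inequalities it requires are all close to equality, so the bookkeeping has to be carried out uniformly for every $r\in[3,p-2]$, and most delicately at $r=p-2$ where $|W|=1$ and there is essentially no slack, and for \emph{every} ordered pair $x,y$. It is exactly the slack that must be reserved for the hardest pairs and the near-Hamiltonian lengths that forces the stronger hypothesis $m\le\frac15(p-9)$ for strong panconnectedness, while when $xy$ is an arc and only the position of $xy$ on a cycle is at stake the weaker hypothesis $m\le\frac15(p-3)$ should suffice.
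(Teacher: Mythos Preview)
Your overall architecture---establish a base case (paths of length $3$, or $4$-cycles through a given arc) and then an increment lemma extending any $(x,y)$-path of length $r\in[3,p-2]$ to one of length $r+1$---matches the paper's. But your increment lemma has a genuine gap. The only extension moves you propose are (a) inserting some $w\in W$ between consecutive path vertices, and (b) replacing a single interior $u_i$ by a detour $u_{i-1}\to w_1\to w_2\to u_{i+1}$ through two vertices of $W$. Move~(b) is simply unavailable once $|W|\le 1$, i.e.\ at $r=p-2$, and even for moderate $|W|$ your counting sketch does not establish that a pair $w_1,w_2$ with compatible thresholds \emph{and} $w_1\to w_2$ exists. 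You flag $r=p-2$ as ``most delicate'' and then leave it open. The actual difficulty, and essentially all the work in any proof of this result, is that in the obstructed case one must reroute \emph{inside the path itself}: with a single threshold vertex $z$ as pivot, one has to locate chords $x_sx_t$ among the $u_i$'s that let the path be rewoven through $z$ with a net gain of one arc. Nothing in your proposal addresses this internal rerouting.

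The paper's route also differs from yours structurally. Rather than argue directly in the $m$-irregular tournament, it embeds $T$ via Moon's theorem (Theorem~\ref{th3}) into a regular tournament $\hat T$ of order $p+m=2n+1$, so that $T=\hat T-S$ with $|S|=m\le\tfrac12(n-2)$, and proves the increment lemma in that setting (Theorem~\ref{th4}); Theorem~\ref{th1} then drops out of Theorem~\ref{th5} and Lemma~\ref{le2}. The advantage of the embedding is that exact regularity ($d^\pm\equiv n$) converts the many near-tight degree inequalities into equalities that pin down structure; the proof of Theorem~\ref{th4} exploits this through a long chain of claims (Claims~1--13) together with two chord-rerouting lemmas (Lemmas~\ref{le3} and~\ref{le4}) controlling how arcs $x_sx_t$ among path vertices interact with the threshold vertex $z$. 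Your direct approach forfeits this exactness and provides no substitute mechanism to close the obstructed case.
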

 
  In \cite{[8]}  and \cite{[10]}, Darbinyan obtained the following:
  
   \begin{theorem}\label{th2}   Let $T$ be a regular tournament of order $2n+1$ and let $S\subset V(T)$. 
   
   (i) \cite{[8]} If $2\leq |S|\leq \frac{1}{3}(n-2)$, then $T- S$ is strongly panconnected.
   
  (ii) \cite{[10]}  Let  $|S|\leq  \frac{1}{2}(n-3)$ and $x,y \in V(T)\setminus S$ be two distinct vertices. If 
   $T- S$ contains an $(x,y)$-path of length $r$, where $r\in [3, 2n-|S|-1]$, then  $T- S$ also contains an $(x,y)$-path of length $r+1$.
   \end{theorem}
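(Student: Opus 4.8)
The plan is to prove part (ii) and then deduce (i) from it. For (i), given distinct $x,y\in V(T-S)$ it suffices to produce a single $(x,y)$-path of length $3$ in $T-S$: part (ii), applied repeatedly, then yields $(x,y)$-paths of every length in $[3,2n-|S|]=[3,|V(T-S)|-1]$, and symmetrically all the $(y,x)$-paths, i.e.\ $T-S$ is strongly panconnected. (When $|S|\ge 2$ the hypothesis of (i) forces $n\ge 8$, whence $\frac{1}{3}(n-2)\le\frac{1}{2}(n-3)$ and (ii) does apply.) The stronger bound $\frac{1}{3}(n-2)$ is spent only on securing the length-$3$ base case, so the real content is (ii). For (ii), put $T'=T-S$, of order $p'=2n+1-|S|$; each vertex of $T'$ has both semidegrees in $[n-|S|,n]$, so $T'$ is $|S|$-irregular. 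Let $P=v_0v_1\cdots v_r$ with $v_0=x$, $v_r=y$ be the given path and $R=V(T')\setminus V(P)$; since $r\le p'-2$ we have $|R|=p'-r-1\ge 1$. Suppose, for a contradiction, that $T'$ has no $(x,y)$-path of length $r+1$. First, no $u\in R$ is insertable into $P$ (there is no index $i$ with $v_i\to u\to v_{i+1}$), since inserting such a $u$ would lengthen $P$ by one. Hence each $u\in R$ has a \emph{type} $j(u)\in\{0,1,\ldots,r+1\}$ with $u\to v_i$ for $i<j(u)$ and $v_i\to u$ for $i\ge j(u)$: along $P$ each off-path vertex dominates a prefix and is dominated by the complementary suffix. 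Write $W_0,W_m,W_+$ for the vertices with $j(u)=0$, with $1\le j(u)\le r$, and with $j(u)=r+1$; then $x\to u$ for $u\in W_0$, $u\to x$ and $u\to y$ for $u\in W_+$, and for $u\in W_m$ of type $j$ one gets a $3$-cycle $v_{j-1}\to v_j\to u\to v_{j-1}$ together with $u\to x$ and $y\to u$.

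Next I would rule out every other operation that would create a length-$(r+1)$ path and record the structure it forces: rerouting at the ends via the arcs between $R$ and $v_1,v_{r-1}$; deleting a ``chordal'' interior vertex $v_k$ (one with $v_{k-1}\to v_{k+1}$) and re-inserting it together with a second vertex; replacing a path-edge by a $2$-path through two vertices of $R$ and compensating by such a deletion; and exploiting the $3$-cycles formed by two $R$-vertices of equal or adjacent type, or by $W_+$- and $W_0$-vertices with $x$ or $y$. Because each off-path vertex dominates a prefix and is dominated by the complementary suffix of $P$, most purely local attempts are automatically void, so what survives is comparatively rigid: bounds on $|W_0|$ and $|W_+|$, restrictions on which types may coexist, and adjacency patterns inside $R$ and at the endpoints. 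To keep the case list finite I would fix $P$ extremally among all length-$r$ $(x,y)$-paths --- say maximizing the number of out-neighbours of $x$ lying on $P$, or the number of chords of $P$, or $|W_0|$ --- so that extremality itself supplies extra arcs.

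Then I would play this structure off against the near-regularity of $T'$. With $\sigma$ the number of arcs from $R$ to $V(P)$ (so $\sigma=\sum_{u\in R}j(u)$), counting arcs within $V(P)$ and within $R$ gives $\sum_{v\in V(P)}d^-_{T'}(v)=\frac{1}{2}r(r+1)+\sigma$ and $\sum_{u\in R}d^+_{T'}(u)=\frac{1}{2}|R|(|R|-1)+\sigma$; bounding every semidegree between $n-|S|$ and $n$ confines $\sigma$ to a short range, and the type classification above forces $\sigma$ out of that range once $|S|\le\frac{1}{2}(n-3)$. The few remaining small configurations ($r=3$, or $|R|\le 2$, or $W_m$ small) I would dispatch by hand.

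The main obstacle is exactly these last two steps: on its own, ``$P$ is not extendable'' is consistent with many semidegree distributions, so one must stack several path-surgery arguments and then match their pooled output against a tight arc count --- bookkeeping that proliferates near $x$ and $y$ and in the small-$r$ and small-$R$ regimes. Choosing the extremal $P$ well, so that the case analysis terminates and the constant $\frac{1}{2}(n-3)$ (rather than something weaker) falls out, is the crux.
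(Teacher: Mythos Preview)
The paper does not prove Theorem~\ref{th2} directly: part (ii) is subsumed by the stronger Theorem~\ref{th4} (bound $\frac{1}{2}(n-2)$ in place of $\frac{1}{2}(n-3)$), and the length-$3$ base case needed for (i) is Lemma~\ref{le2}(i). Your reduction of (i) to (ii) plus a base case is exactly this scheme, and your remark that the constant $\frac{1}{3}(n-2)$ is consumed only by the base case is correct: in Lemma~\ref{le2}(i) the unique obstruction to a length-$3$ path occurs precisely when $|S|=\frac{1}{3}(n-1)$. Your opening for (ii) --- the type function $j(u)$ forced by non-insertability --- is also right and is the same structure the paper exploits through its index $\alpha$ and the vertex $z$ of display~(12).

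The gap is your endgame. Your two identities for $\sigma=\sum_{u\in R} j(u)$ are the \emph{same} cross-arc count read from the two sides, so bounding all semidegrees in $[n-|S|,n]$ yields compatible intervals for $\sigma$, never contradictory ones; and the bare type classification places no independent restriction on $\sigma$. Hence a global double count cannot by itself deliver a contradiction, and you have not indicated what the stacked surgery constraints actually force on $\sigma$ or on the individual types. The paper's argument (for the sharper Theorem~\ref{th4}) is of a different texture: rather than one global inequality it proves a chain of thirteen claims, each pinning down further arcs --- first your $W_0=W_+=\emptyset$, then $A\to\{x_0,x_1,x_2,x_3\}$ and dually, then bounds on auxiliary sets $B,M,W,F$ --- and at each stage exhibits a \emph{single} vertex whose in- or out-degree is pushed above $n$. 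Two preparatory lemmas (Lemmas~\ref{le3} and~\ref{le4}) on forbidden arc patterns between segments of $P$ in the presence of $z$ drive the later claims, and the decisive case split is not an extremal choice of $P$ but on whether $A(\{x_1,\ldots,x_{\alpha-1}\}\to x_r)=\emptyset$, the second case reduced to the first by duality via Lemma~\ref{le3}. That this runs to several pages even for the sharper constant indicates that the ``bookkeeping'' you flag as the crux is not residual detail but essentially the whole proof.
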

   
We will use the following result of Moon \cite{[14]}. 
   
 \begin{theorem}\label{th3} Let $H$ be an $m$-irregular tournament of order $p\geq 2$. Then there is    a regular tournament of order $p+m$ such that $T$ contains $H$ as a subtournament. \end{theorem}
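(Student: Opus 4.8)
The plan is to prove Theorem~\ref{th3} by induction on $m$. If $m=0$ then $H$ is already regular and we take $T=H$ (note that $p=p+0$ is then odd, as required). Assume the statement holds for $m-1$. It then suffices to show that every $m$-irregular tournament $H$ of order $p$ can be extended, by adding a single new vertex $v$, to a tournament $H'$ of order $p+1$ with $i(H')=m-1$: indeed, by the induction hypothesis $H'$, and hence $H$, embeds into a regular tournament of order $(p+1)+(m-1)=p+m$.

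To construct $H'$, put $\delta(x)=d^+_H(x)-d^-_H(x)$ for $x\in V(H)$. Since $d^+_H(x)+d^-_H(x)=p-1$, each $\delta(x)$ has the same parity as $p-1$, and by $m$-irregularity $-m\le \delta(x)\le m$; in particular $m$ and $p-1$ have the same parity, so $(p-m+1)/2$ and $(p+m-1)/2$ are integers and $p+m$ is odd. Let $A_0=\{x\in V(H):\delta(x)=m\}$ and $B_0=\{x\in V(H):\delta(x)=-m\}$; these sets are disjoint (for $m\ge1$). From $\sum_{x\in V(H)}\delta(x)=0$ together with $\delta(x)\ge -m$ for all $x$ we obtain $m|A_0|\le m(p-|A_0|)$, hence $|A_0|\le p/2$, and symmetrically $|B_0|\le p/2$.

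Now add the vertex $v$ and let it dominate a set $A$ of old vertices with $A_0\subseteq A\subseteq V(H)\setminus B_0$, the remaining old vertices dominating $v$; since $A_0$ and $B_0$ are disjoint, $A$ may be chosen of any size $k$ with $|A_0|\le k\le p-|B_0|$. In the resulting $H'$, the degree imbalance at an old vertex $x\in A$ becomes $\delta(x)-1$, at an old vertex $x\notin A$ it becomes $\delta(x)+1$, and at $v$ it equals $2k-p$. Because $A_0\subseteq A$ and $A\cap B_0=\emptyset$, no old vertex has imbalance exceeding $m-1$ in absolute value; and since $i(H)=m$, the set $A_0\cup B_0$ is nonempty, so at least one old vertex attains imbalance $m-1$ in $H'$. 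Hence $H'$ is $(m-1)$-irregular provided we can also choose $k$ with $|2k-p|\le m-1$, that is, $k\in[(p-m+1)/2,\,(p+m-1)/2]$. This interval has integer endpoints and meets $[|A_0|,\,p-|B_0|]$, because $|A_0|\le p/2\le (p+m-1)/2$ and $|B_0|\le p/2\le (p+m-1)/2$ (using $m\ge1$); so a suitable integer $k$ exists. This yields the required $H'$ and completes the induction.

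The only genuine difficulty is the feasibility of this single-vertex extension: one must route the ``extremal'' old vertices $A_0$ and $B_0$ onto the correct side of $v$ while simultaneously keeping $v$'s own imbalance at most $m-1$. That all three requirements can be met at once is exactly what the counting bounds $|A_0|,|B_0|\le p/2$ (coming from $\sum_x\delta(x)=0$) and the parity relation between $m$ and $p-1$ guarantee; everything else is bookkeeping.
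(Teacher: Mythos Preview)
The paper does not prove Theorem~\ref{th3}; it is quoted as a known result of Moon~\cite{[14]} and used as a black box. So there is no in-paper proof to compare against.

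Your argument is correct. The induction on $m$ via a one-vertex extension is the natural approach (and essentially Moon's). A couple of points worth making explicit for a polished write-up: the parity observation $\delta(x)\equiv p-1\pmod 2$ is what guarantees that for $x\in A$ with $x\notin A_0$ one has $\delta(x)\le m-2$ (not merely $\le m-1$), so the new imbalance $\delta(x)-1$ indeed satisfies $|\delta(x)-1|\le m-1$; you use this implicitly. Likewise, the two integer intervals $[|A_0|,\,p-|B_0|]$ and $[(p-m+1)/2,\,(p+m-1)/2]$ both have integer endpoints, so once you have shown they overlap as real intervals (via $|A_0|,|B_0|\le p/2$ and $m\ge 1$) their intersection automatically contains an integer $k$; this is clear but worth a word. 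Finally, your remark that some vertex of $A_0\cup B_0$ attains imbalance exactly $m-1$ in $H'$ ensures $i(H')=m-1$ on the nose, so ordinary (rather than strong) induction suffices.
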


  From Theorems \ref{th2}(ii) and \ref{th3} it is not difficult to obtain the following:

\begin{corollary}\label{co1} \cite{[10]} Let $T$ be an $m$-irregular  tournament of order $p\geq 7$, where $m\leq \frac{1}{3} (p-7)$, and  $x$, $y$ are two distinct vertices.  If $T$ contains an  $(x,y)$-path of length $r$, where $r\in [3, p-2]$, then  $T$ also contains an $(x,y)$-path of length $r+1$. \end{corollary}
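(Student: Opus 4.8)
The plan is to derive Corollary~\ref{co1} directly from Theorems~\ref{th2}(ii) and~\ref{th3} by a single embedding argument, with essentially no new graph-theoretic work. First I would record a parity fact: in any tournament of order $p$ one has $d^+(x)+d^-(x)=p-1$ for every vertex $x$, so $d^+(x)-d^-(x)=2d^+(x)-(p-1)\equiv p-1 \pmod 2$; consequently the irregularity $m=i(T)$ satisfies $m\equiv p-1\pmod 2$, and therefore $p+m\equiv 2p-1\equiv 1\pmod 2$ is odd. This guarantees that the order $p+m$ in Theorem~\ref{th3} is admissible for a regular tournament, so there is a regular tournament $T'$ of order $p+m=2n+1$, where $n=\frac{1}{2}(p+m-1)$, that contains $T$ as a subtournament.

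Next I would put $S=V(T')\setminus V(T)$, so that $|S|=m$ and $T'-S=T'\langle V(T)\rangle=T$, with $x,y\in V(T)=V(T')\setminus S$ distinct. The arithmetic core of the proof is that the hypothesis $m\le\frac{1}{3}(p-7)$ is \emph{exactly} the inequality needed in Theorem~\ref{th2}(ii): substituting $n=\frac{1}{2}(p+m-1)$ into $|S|\le\frac{1}{2}(n-3)$ gives $m\le\frac{p+m-7}{4}$, i.e.\ $4m\le p+m-7$, i.e.\ $3m\le p-7$. One also checks $n\ge 3$ — indeed $p\ge 7$ already forces $p+m\ge 7$ — so all hypotheses of Theorem~\ref{th2}(ii) are satisfied.

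Finally I would match the ranges of the path lengths. Since $2n=p+m-1$, we get $2n-|S|-1=(p+m-1)-m-1=p-2$, so the interval $[3,\,2n-|S|-1]$ occurring in Theorem~\ref{th2}(ii) is precisely $[3,p-2]$. Thus, as $T=T'-S$ contains an $(x,y)$-path of length $r$ with $r\in[3,p-2]$, Theorem~\ref{th2}(ii) produces an $(x,y)$-path of length $r+1$ in $T'-S=T$, which is exactly the assertion of Corollary~\ref{co1}.

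I expect the only genuinely delicate point to be the bookkeeping rather than any substantive argument: one must be careful that the parity observation legitimizes the invocation of Theorem~\ref{th3} (so that $p+m=2n+1$ with $n$ an integer), and that the constant $\frac{1}{3}(p-7)$ coincides with $\frac{1}{2}(n-3)$ on the nose rather than merely up to lower-order terms. Everything beyond that is a routine substitution, since all the structural content is already packaged inside Theorems~\ref{th2}(ii) and~\ref{th3}.
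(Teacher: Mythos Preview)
Your proposal is correct and follows exactly the route the paper indicates: the paper does not write out a proof of Corollary~\ref{co1} but simply states that it follows from Theorems~\ref{th2}(ii) and~\ref{th3}, which is precisely the embedding-plus-arithmetic argument you carry out. Your parity check that $p+m$ is odd and your verification that $m\le\frac{1}{3}(p-7)$ is equivalent to $|S|\le\frac{1}{2}(n-3)$ (and that the length interval $[3,2n-|S|-1]$ becomes $[3,p-2]$) are exactly the bookkeeping the paper leaves to the reader.
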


In this paper, we prove the following theorems, which improve in a sense  the above-mentioned results of Alspach, Jacobsen, Alspach et al., Thomassen and Darbinyan. The following theorem is our main result.
 
   \begin{theorem}\label{th4} Let $T$ be a regular tournament of order $2n+1\geq 11$ and let $S$ be a subset  in $V(T)$. Suppose that  $|S|\leq \frac{1}{2}(n-2)$ and $x,y$  be two distinct vertices in $V(T)\setminus S$. If 
   $T-S$ contains an $(x,y)$-path of length $r$, where $r\in [3, 2n-|S|-1]$, then  $T- S$ also contains an $(x,y)$-path of length $r+1$. \end{theorem}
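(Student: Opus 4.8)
The plan is to proceed by a path-lengthening argument, closely following the strategy of Theorem~\ref{th2}(ii) but pushing the bound on $|S|$ up to $\tfrac{1}{2}(n-2)$. Fix a shortest ``witness'' obstruction: suppose $T-S$ has an $(x,y)$-path $P=x_0x_1\cdots x_r$ of length $r\in[3,2n-|S|-1]$ but no $(x,y)$-path of length $r+1$. Write $W=V(P)$, $m=|S|$, and let $U=V(T)\setminus(S\cup W)$; note $|U|=2n-m-r\ge 1$. The first step is the standard one: since we cannot lengthen $P$, for every $u\in U$ the vertex $u$ cannot be ``inserted'' into $P$, i.e. there is no index $i\in[0,r-1]$ with $x_iu,ux_{i+1}\in A(T)$; equivalently, along $P$ the arcs between $u$ and the $x_i$ ``switch'' at most\ldots\ actually they never switch from $P$-to-$u$ back to $u$-to-$P$, so the set $\{i: x_i\to u\}$ is a prefix $[0,j_u]$ and $\{i: u\to x_i\}$ is the complementary suffix. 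This gives each $u\in U$ a well-defined ``break point'' on $P$, and a counting/pigeonhole argument on these break points, together with the fact that $x_0=x$ and $x_r=y$ are endpoints, will be used to extract structure.

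Next I would combine the insertion obstruction with two further moves that also fail: (i) replacing a single internal vertex $x_i$ of $P$ by a vertex $u\in U$ and re-routing (a ``rotation'' through $U$), and (ii) replacing a subpath $x_ix_{i+1}$ by a detour $x_i u x_{i+1}$ is exactly insertion, but replacing $x_ix_{i+1}x_{i+2}$ by $x_i u x_{i+2}$ keeps the length and may free $x_{i+1}$ — iterating such swaps one shows that if too many vertices of $U$ have nearby break points then a length-$(r+1)$ path appears. Each failure translates into an inequality bounding certain indegrees or outdegrees of the $x_i$ restricted to $U$ and to $W$. Summing these over all of $V(P)$ and using that $T$ is regular — so $d^+(v)=d^-(v)=n$ in $T$, hence in $T-S$ every vertex has out- and indegree at least $n-m$ — yields a contradiction once $m\le\tfrac12(n-2)$ and $r\ge 3$, $2n+1\ge 11$. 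The hypothesis $2n+1\ge 11$ (i.e. $n\ge 5$) is what is needed to handle the smallest feasible values of $r$ and $|U|$ separately as base cases.

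The main obstacle, and where the improvement over the $\tfrac13(n-2)$ bound of Theorem~\ref{th2}(i) and the $\tfrac12(n-3)$ bound of Theorem~\ref{th2}(ii) must come from, is the accounting when $U$ is small — in particular when $|U|=1$ or $2$, i.e. when $r$ is close to $2n-m-1$. There the pigeonhole over break points is weak, so one instead argues directly about the single vertex $u\in U$: its break point $j_u$ splits $P$ into a prefix dominated-into-$u$ and a suffix dominated-out-of-$u$, and one shows that either $x_{j_u}$ or $x_{j_u+1}$ can be swapped out to $u$, or else a short initial/terminal segment of $P$ can be lengthened using $u$ plus at most one more re-routing, contradicting maximality. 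I expect this endgame to require a careful, slightly delicate case split on the position of $j_u$ and on adjacencies among $x_{j_u-1},x_{j_u},x_{j_u+1},x_{j_u+2},u$, and it is here that the constant $\tfrac12$ (rather than $\tfrac13$) is justified, since the degree slack $n-m\ge \tfrac12(n+2)$ is exactly enough to force one of these local moves. A secondary technical point is to verify that all vertices invoked in these re-routings lie outside $S$, which is automatic since $P\subseteq T-S$ and $U\cap S=\varnothing$, but must be stated to keep the argument inside $T-S$.
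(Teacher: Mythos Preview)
Your starting observation is the right one and matches the paper: the insertion obstruction forces, for each $u\in U$, a single break point on $P$. (You have the direction reversed, though: since $x_iu\in A(T)$ forces $x_{i+1}u\in A(T)$, the set $\{i:x_i\to u\}$ is a \emph{suffix}, not a prefix; correspondingly the paper's first claim is that $A\to x_0$ and $x_r\to A$.) From there, however, your plan is far too optimistic and does not contain the ideas that carry the argument. The paper's proof runs through thirteen numbered claims together with two preparatory lemmas (Lemmas~\ref{le3} and~\ref{le4}) that rule out specific chord patterns of $P$ relative to the break point; these lemmas are where much of the real work lives, and nothing in your sketch corresponds to them.

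More specifically, the generic counting you describe --- summing degree inequalities coming from failed insertions and one-vertex swaps --- is essentially what already yields the weaker bounds of Theorem~\ref{th2}; it does not by itself push to $\tfrac12(n-2)$. The paper's route is quite different: (i) show $|A|\le k+1$ (your $U$), so the off-path set is small but by no means of size $1$ or $2$; (ii) fix the largest index $\alpha$ with $A\to x_\alpha$ and a vertex $z\in A$ with $\{x_{\alpha+1},\dots,x_r\}\to z\to\{x_0,\dots,x_\alpha\}$; (iii) analyse chords of $P$ that jump across $x_\alpha$ via auxiliary sets $B,M,W,F$, each of whose sizes is bounded by a separate counting claim; the final contradiction comes from combining several of these bounds at once, not from a single global sum. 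Your proposed endgame (``$|U|=1$ or $2$'' with a local five-vertex case split) is not where the difficulty actually lies; the hard part is the interaction between long chords $x_sx_t$ with $s<\alpha<t$ and the in-neighbours of $x_r$ on $P$, which is precisely what Lemmas~\ref{le3}--\ref{le4} are designed to control. Without an analogue of those lemmas, your outline does not reach the constant $\tfrac12$.
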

   
Note that Theorem 1.2(i) was only announced in  \cite{[8]} and its proof has never been published. The preprint \cite{[10]} gave only an outline of the proof of Theorem 1.2(ii) and its complete proof has never been published either.  Also note that the main arguments in the proof of Theorem 1.5 are different from those in the proof of Theorem 1.1 by Thomassen.\\

\noindent\textbf{Remark 1} Let $H$ be a tournament of order $p\geq 5$  with irregularity $m\leq \frac{1}{3}(p-5)$. Then by  Theorem \ref{th3}, there is a regular tournament $T$ of order $p+m=2n+1$ such that $T$ contains $H$ as a subtournament. Let $S=V(T)\setminus V(H)$. Note that $|S|=m$. Then we have $p+ \frac{1}{3}(p-5)\geq 2n+1$ implying $p\geq \frac{1}{2}(3n+4)$. Therefore, $m=2n+1-p\leq 2n+1-\frac{1}{2}(3n+4)=\frac{1}{2}(n-2)$. Thus, $|S|=m\leq \frac{1}{2}(n-2)$.\\



\begin{theorem}\label{th5} Let $T$ be an $m$-irregular tournament of order $p$ such that $p+m\ge 11$.  If 
$m\leq \frac{1}{3}(p-5)$ (respectively, 
 $m\leq \frac{1}{5}(p-3)$), then $T$ is 4-strongly panconnected (respectively,  $T$ is strongly panconnected or belongs to the family  $\cal G$ of tournaments defined in Section 3). \end{theorem}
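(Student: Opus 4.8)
The plan is to reduce Theorem~\ref{th5} to the main result, Theorem~\ref{th4}, by a Moon embedding, leaving only two short-path ``base cases'' to be established directly. Let $T$ be $m$-irregular of order $p$ with $p+m\ge 11$. By Theorem~\ref{th3}, $T$ is a subtournament of a regular tournament $T'$ of order $2n+1:=p+m\ (\ge 11)$; put $S=V(T')\setminus V(T)$, so $|S|=m$ and $T=T'-S$. When $m\le\frac13(p-5)$, the computation of Remark~1 gives $|S|=m\le\frac12(n-2)$, so $(T',S)$ satisfies the hypotheses of Theorem~\ref{th4}; note also that $2n-|S|-1=p-2$. Hence, for any two distinct vertices $x,y\in V(T)$, the existence in $T$ of an $(x,y)$-path of length $r$ with $3\le r\le p-2$ implies the existence of one of length $r+1$; iterating, once $T$ contains an $(x,y)$-path of some length $\ell\in[3,p-1]$, it contains $(x,y)$-paths of \emph{all} lengths in $[\ell,p-1]$.

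It therefore suffices to prove the two base cases: (a) if $m\le\frac13(p-5)$, then $T$ has an $(x,y)$-path of length $4$ for every ordered pair of distinct vertices $x,y$; and (b) if $m\le\frac15(p-3)$, then either $T$ belongs to the family $\cal G$ defined in Section~3, or $T$ has an $(x,y)$-path of length $3$ for every such pair. (For (b), note that $p+m\ge 11$ and $m\le\frac15(p-3)$ force $p\ge 10$, and then $m\le\frac15(p-3)\le\frac13(p-5)$, so the reduction above still applies.) Indeed, (a) and the propagation together give $(x,y)$-paths of all lengths in $[4,p-1]$ for every ordered pair, i.e.\ $T$ is $4$-strongly panconnected; and (b) and the propagation together give, whenever $T\notin\cal G$, $(x,y)$-paths of all lengths in $[3,p-1]$, i.e.\ $T$ is strongly panconnected. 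The finitely many small orders $p+m$ near $11$ that fall outside the scope of the lemmas below are treated directly or via the known results for regular and almost regular tournaments, cf.\ Theorem~\ref{th2}(i).

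For (b), fix $x,y$ and write $A=N^+(x)\setminus\{y\}$ and $B=N^-(y)\setminus\{x\}$. The hypothesis $m\le\frac15(p-3)$ forces $\frac15(2p-1)\le d^+(v),d^-(v)\le\frac15(3p-4)$ for every vertex $v$, and in particular makes $T$ strongly connected. An $(x,y)$-path of length $3$ exists precisely when some arc leads from a vertex of $A$ to a \emph{distinct} vertex of $B$; so the absence of such a path forces $|A\cap B|\le 1$ and a rigid domination pattern of $B$ over $A$, and a case analysis exploiting the semidegree bounds and strong connectivity shows that this pattern pins $T$ down to one of the tournaments that constitute $\cal G$. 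Statement (a) can be handled by the same method but with more slack: a length-$4$ path may be sought in the form $x\to a\to z\to b\to y$ with an auxiliary vertex $z\notin\{x,y\}$, and one checks---by splitting into cases according to the orientation of the arcs among $A$, $B$ and the remaining vertices, and using strong connectivity to rule out the potential obstructions---that no configuration can block such a path even under the weaker bound $m\le\frac13(p-5)$ (which gives semidegrees between $\frac13(p+1)$ and $\frac23(p-2)$).

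The main obstacle is precisely this combinatorial analysis of the two base cases: pinning down exactly which orientations can destroy a length-$3$ path (which is what the definition of $\cal G$ must capture) and checking that none of them can destroy a length-$4$ path under the weaker hypothesis. The remaining ingredients---the Moon embedding and the length-by-length propagation furnished by Theorem~\ref{th4}---are routine.
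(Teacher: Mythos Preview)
Your strategy coincides with the paper's: Moon-embed $T$ in a regular $T'$ of order $2n+1=p+m$ (Theorem~\ref{th3}), verify $|S|=m\le\frac12(n-2)$ via Remark~1, and invoke Theorem~\ref{th4} for the length-by-length propagation. The base cases you call (a) and (b) are exactly Lemma~\ref{le2}(ii) and Lemma~\ref{le2}(i) of the paper, proved there by short degree-counting arguments carried out in the regular supertournament $T'$ (exploiting $d^+=d^-=n$ for every vertex), not by a strong-connectivity case analysis in $T$ itself as your sketch suggests. Two minor adjustments: the paper formulates (a) as ``if $T'-S$ has no $(x,y)$-path of length~$3$, then it has one of length~$4$,'' which is the cleaner base case---your version ``always a length-$4$ path'' only follows after one more appeal to Theorem~\ref{th4} in the subcase where a length-$3$ path already exists, so your direct combinatorial sketch for (a) cannot avoid that split. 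And your caveat about treating small orders separately is unnecessary: the hypothesis $p+m\ge11$ is precisely $n\ge5$, which is already the standing assumption of Lemma~\ref{le2}(ii).
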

 \begin{proof}
Use the construction of Remark 1 to obtain  a regular tournament $\hat{T}$ containing $T$. Let $2n+1=p+m$ be the order of $\hat{T}$. Let $S=V(\hat{T})\setminus V(T).$

Let us first prove the case of  $m\leq \frac{1}{3}(p-5)$.  By Remark 1,  $m=|S|\leq \frac{1}{2}(n-2)$ and hence by Lemma \ref{le2}(ii), $T$ has an $(x,y)$-path of length 3 or 4 for every pair $x,y$ of distinct vertices. 
Also, by Theorem \ref{th4}, $T$ has an $(x,y)$-path of any length from 4 to $|V(T)|-1$. Thus, $T$ is 4-strongly panconnected.

Now we prove the case of $m\leq \frac{1}{5}(p-3)$. Since $p+m\ge 11$ and $m\leq \frac{1}{5}(p-3)$, we have $p\ge 10.$ Hence, $m\leq \frac{1}{5}(p-3)\le \frac{1}{3}(p-5)$ and we can use Lemma \ref{le2} to show that
$T$ either has an $(x,y)$-path of length 3  for every pair $x,y$ of distinct vertices, or $T$ belongs to the family  $\cal G$. If $T$ does not belongs to $\cal G$, by Theorem \ref{th4}, $T$ is strongly panconnected.
\end{proof}

\noindent\textbf{Remark 2.} Using the above observation, we can formulate Theorem \ref{th5} in terms of semidegrees
 as follows:
{\it If the semidegrees of every vertex of a tournament $T$  of order $p$ are between
 $\frac{1}{3}(p+1)$ and $\frac{2}{3}(p-2)$ (respectively, between $\frac{1}{5}(2p-1)$ and  $\frac{1}{5}(3p-4)$), then the assertion of Theorem \ref{th5} holds}.\\
 
 From Theorem \ref{th5} (Theorem \ref{th4}) it follows that
 
 (i) if  $T$ is regular (i.e, $m=0$), then for $p\geq 11$ we have a result by Alspach that any regular tournament is arc pancyclic (observe that any arc of a regular tournament is on 3-cycle) and for $p\geq 11$ a result by Alspach et al.  that any regular tournament is strongly panconnected.
   
  (ii) if $T$ is almost regular (i.e, $m=1$), then for $p\geq 10$ we have a result by Jacobsen that any almost regular tournament is 4-arc pancyclic  and  a result by Darbinyan  that any almost regular tournament of order $p\geq 10$ is strongly panconnected.  
  
  (iii) from Theorem \ref{th5} (Theorem \ref{th4}) also we have Theorems \ref{th1} and \ref{th2}.\\
   
 The following remark shows that Theorem 1.5 is sharp in a sense.\\

\noindent\textbf{Remark 3.} If in Theorem \ref{th4}  we replace 
$\frac{1}{2}(n-2)$  with $\frac{1}{2}(n-1)$ (respectively, $\frac{n}{2}$), then there is a regular tournament of order $2n+1=11$ (respectively, $2n+1=13$), which contains a subset $S\subset V(T)$ with $|S|=\frac{1}{2}(n-1)$ (respectively, $|S|=\frac{n}{2}$) and two distinct vertices $x,y\in V(T)\setminus S$ such that $T-S$ contains an $(x,y)$-path of length 3, but $T-S$ contains no 
 $(x,y)$-path of length 4.
 
 Here we define only a regular tournament $T$ of order $2n+1=11$ as follows:
 
$V(T)=\{x_0,x_1,x_2, x_3\}\cup A\cup S$, where $A=\{u_1,u_2,u_3,u_4,z\}$ and
$S=\{v_1,v_2\}$. The arc set of $T$ is defined by in-neighborhoods of vertices as follows: $N^-(x_0)=A$, $N^-(x_1)=\{x_0,z,u_3,u_4,v_2\}$,
$N^-(x_2)=\{x_0,x_1,z,u_3,u_4\}$, $N^-(x_3)=\{x_0,x_1,x_2\}\cup S$, $N^-(z)=\{x_3,u_1,u_2,u_3,u_4\}$, 
$N^-(u_1)=\{x_1,x_2,x_3,u_4,v_2\}$, $N^-(u_2)=\{x_1,x_2,x_3,u_1,v_1\}$, $N^-(u_3)=\{x_3,u_1,u_2,v_1,v_2\}$,
$N^-(u_4)=\{x_3,u_2,u_3,v_1,v_2\}$,
 $N^-(v_1)=\{x_0,x_1,x_2,z,u_1\}$ and $N^-(v_2)=\{x_0,x_2,z,v_1,u_2\}$.
 
 It is not difficult to check that $T$ contains an $(x_0,x_3)$-path of length 3, but contains no $(x_0,x_3)$-path of length 4.

\section {Further terminology and notation}

If $xy$ is an arc of a digraph $D$, then we say that $x$ {\em dominates} $y$. For disjoint subsets $B$ and  $C$ of $V(D)$, let $A(B\rightarrow C)=\{xy\in A(D)\, |\, x\in B, y\in C\}$.   
We write  $B\rightarrow C$ if every vertex  of $B$  dominates every vertex of $C$. If $A\subseteq V(D)\setminus (B\cup C)$, then $A\rightarrow B\rightarrow C$ is a shortcut for $A\rightarrow B$ and $B \rightarrow C$.
If $x\in V(D)$ and $A=\{x\}$, we write $x$ instead of $\{x\}$. 

 The {\em out-neighborhood} (respectively, {\em in-neighborhood}) of a vertex $x$ is the set $N^+(x)=\{y\in V(D)\, |\, xy\in A(D)\}$ (respectively, $N^-(x)=\{y\in V(D)\, |\, yx\in A(D)\}$). Similarly, if $B\subseteq V(D)$, then $N^+(x,B)=\{y\in B \,| \, xy\in A(D)\}$ and $N^-(x,B)=\{y\in B \, |\, yx\in A(D)\}$. We define $N^{+2}(x,B)$ and $N^{-2}(x,B)$ as follows:
$$N^{+2}(x,B)= \{z\in B\setminus (\{x\}\cup N^{+}(x,B))\,\, |\,\, yz\in A(D),  y\in N^{+}(x,B)\},$$
$$N^{-2}(x,B)= \{z\in B\setminus (\{x\}\cup N^{-}(x,B))\,\, |\,\, zy\in A(D),  y\in N^{-}(x,B)\}.$$
 Note that for a vertex $x$, we have $d^+(x)=|N^+(x)|$ and $d^-(x)=|N^-(x)|$. Similarly, let $d^+(x,B)=|N^+(x,B)|$ and $d^-(x,B)=|N^-(x,B)|$.

The {\it path} (respectively, the {\it cycle}) in $D$ consisting of distinct vertices $x_1,x_2,\ldots ,x_m$ ($m\geq 2$) and arcs $x_ix_{i+1}$, $i\in [1,m-1]$  (respectively, $x_ix_{i+1}$, $i\in [1,m-1]$, and $x_mx_1$), is denoted  by $x_1x_2\cdots x_m$ (respectively, $x_1x_2\cdots x_mx_1$). The {\it length} of a cycle or a path is the number of its arcs. A {\it $k$-cycle} is a cycle of length $k$. We say that $x_1x_2\ldots x_m$  is a path from $x_1$ to $x_m$ or is an $(x_1,x_m)$-path.  The digraph $\overleftarrow{D}$ obtained from a digraph $D$ by replacing every arc $xy\in A(D)$ with the arc $yx$ is called the {\em converse} of $D.$
We will use  {\it the principle of digraph duality}: Let $D$ be a digraph and 
$\overleftarrow{D}$ the converse  of $D$. Then $D$ contains a subdigraph $H$ if and only if $\overleftarrow{D}$ contains the converse $\overleftarrow{H}$ of $H$.

\section { Preliminaries }

The following lemma states several well-known and simple claims which  are the basis of our results
and other theorems on directed cycles and paths in tournaments. The claims
will be used extensively in the proof of our result. 

\begin{lemma}\label{le1} Let $T$ be a  tournament  of order $p\geq 2$. Then the following statements are true.

(i) The tournament $T$  contains two distinct vertices $x$ and $y$ (respectively, $u$ and $v$) such that
 $d^-(x)\leq  \frac{1}{2}(p-1)$ and $d^-(y)\geq \frac{1}{2}(p-1)$ (respectively, $d^+(u)\leq \frac{1}{2} (p-1)$ and $d^+(v)\geq \frac{1}{2}(p-1)$).

(ii) If $T$ is regular, then $p=2n+1$ and for any vertex $x\in V(T)$, $d^-(x)=d^+(x)=n$. 

(iii) If $T$ is not  regular, then  $T$ contains two distinct vertices $x$ and $y$ (respectively, $u$ and $v$) such that
 $d^-(x)\leq \frac{1}{2}(p-2)$ and $d^-(y)\geq \frac{1}{2}p$ (respectively, $d^+(u)\leq \frac{1}{2}(p-2)$ and $d^+(v)\geq \frac{1}{2}p$.
 
(iv) If $T$ is almost  regular, then $p=2n$ and   $n$ vertices have indegrees equal to $n$ and the other $n$ vertices have outdegrees equal to $n$. 
 
 (v)  Let $T$ be a non-regular tournament. If  for all $v\in V(T)$, $d^-(v)< \frac{1}{2}(p+1)$ (or $d^+(v)< \frac{1}{2}(p+1)$), then $T$ is  almost regular.
 
 (vi)   Let $T$ be a tournament. If  for all $v\in V(T)$, $d^-(v)< \frac{1}{2}p$ (or $d^+(v)< \frac{1}{2}p$), then $T$ is regular. \end{lemma}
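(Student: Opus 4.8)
The plan is to deduce all six assertions from three elementary observations about a tournament $T$ of order $p$: (1) every vertex $v$ satisfies $d^+(v)+d^-(v)=p-1$; (2) $\sum_{v\in V(T)}d^-(v)=\sum_{v\in V(T)}d^+(v)=\frac{1}{2}p(p-1)$, so the average in-degree and the average out-degree are both exactly $\frac{1}{2}(p-1)$; and (3) the principle of digraph duality, which interchanges $d^+$ and $d^-$ at every vertex when we pass from $T$ to its converse $\overleftarrow{T}$. By (3), in each of (i), (iii), (v), (vi) the ``out-degree'' version (the ``respectively'' clause) is just the ``in-degree'' version applied to $\overleftarrow{T}$, so it suffices to prove one of the two in each case.

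For (i): by (2) the vertex of smallest in-degree has $d^-\le \frac{1}{2}(p-1)$ and the vertex of largest in-degree has $d^-\ge \frac{1}{2}(p-1)$; if all in-degrees are equal one picks any two distinct vertices (available since $p\ge 2$), and otherwise these two extremal vertices are distinct. Part (ii) is immediate from the definition of regularity together with (1): $d^+(v)=d^-(v)$ and $d^+(v)+d^-(v)=p-1$ give $d^+(v)=d^-(v)=\frac{1}{2}(p-1)$ for every $v$, forcing $p$ odd, say $p=2n+1$, and every semidegree equal to $n$.

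For (iii): since $T$ is not regular, the in-degrees are not all equal to the average $\frac{1}{2}(p-1)$, so by (2) some vertex $x$ has $d^-(x)<\frac{1}{2}(p-1)$ and some vertex $y$ has $d^-(y)>\frac{1}{2}(p-1)$, and clearly $x\ne y$; a one-line check in each parity of $p$ turns these strict inequalities into the asserted bounds $d^-(x)\le \frac{1}{2}(p-2)$ and $d^-(y)\ge \frac{1}{2}p$. Part (iv) is a counting argument: if $T$ is almost regular and $p$ were odd, then $d^+(v)-d^-(v)$ would be even for every $v$ by (1), contradicting $i(T)=1$; hence $p=2n$, and then $d^+(v)+d^-(v)=2n-1$ together with $|d^+(v)-d^-(v)|=1$ forces $\{d^+(v),d^-(v)\}=\{n-1,n\}$ for every $v$. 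If $a$ vertices have in-degree $n$, then (2) gives $an+(2n-a)(n-1)=n(2n-1)$, whence $a=n$, which is exactly the claimed structure.

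Finally, (v) and (vi) follow from the same degree count, split according to the parity of $p$. In one parity the hypothesis is either unsatisfiable by (2) --- no tournament has all its in-degrees that small --- or it already forces $T$ to be regular, so the implication holds vacuously (in (v), ``regular'' contradicts the standing hypothesis, and in (vi) it is precisely the conclusion). In the other parity the bound, together with (1) (and, in (v), the companion bound on the out-degrees that the hypothesis provides), squeezes each pair $(d^+(v),d^-(v))$ into the claimed range, yielding regularity in (vi) and $i(T)\le 1$ --- hence $i(T)=1$ by non-regularity --- in (v). The only place where any care is needed in the whole lemma is precisely this parity bookkeeping, together with correctly rounding a strict inequality between an integer and a half-integer to the sharp integer bound; every other step is immediate from (1)--(3) and duality.
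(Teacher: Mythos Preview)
The paper gives no proof of this lemma; it simply lists the six assertions as ``well-known and simple claims'' and moves on. Your treatment of parts (i)--(iv) and (vi) is correct and is exactly the standard averaging-plus-parity argument one would give.

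Part (v), however, has a real gap. For $p$ even, the hypothesis $d^-(v)<\tfrac12(p+1)$ yields only $d^-(v)\le p/2$, and via your observation (1) that gives $d^+(v)\ge p/2-1$, a \emph{lower} bound on $d^+(v)$. The ``companion bound on the out-degrees that the hypothesis provides'' simply does not exist: bounding every in-degree from above does not prevent some single vertex from having a very small in-degree and hence a very large out-degree. Concretely, take $p=4$, let one vertex $a$ dominate the other three, and let $b,c,d$ form a directed $3$-cycle. Then the in-degrees are $0,2,2,2$, all strictly less than $(p+1)/2=5/2$, the tournament is non-regular, and yet $i(T)=|d^+(a)-d^-(a)|=3$, so $T$ is not almost regular. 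Thus the in-degree version of (v), read literally, is false, and no argument for it can be completed. If the intended hypothesis is that \emph{both} $d^-(v)<\tfrac12(p+1)$ and $d^+(v)<\tfrac12(p+1)$ hold for every $v$---which is the only reading under which your ``squeeze'' actually goes through, since then $d^-(v),d^+(v)\in\{p/2-1,p/2\}$ for $p$ even---you should say so explicitly and flag that (v) as printed needs this correction.
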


 To formulate Lemma \ref{le2},
   we need the following definition.\\
   
\noindent \textbf{Definition.} {\it By $\cal G$ we denote the set of regular tournaments, each of which has   order $6k+3\geq 9$ and vertex set $\{x,y,z\}\cup A\cup B\cup C\cup S$ with the properties $|A|=|C|= 2k-1$, $|B|=k+2$, $|S|=k$, the subtournaments induced by the subsets $A$, $C$, $\{z\}\cup B\cup S$ are regular, $A\rightarrow B\cup S\rightarrow C$, $C\rightarrow A$,  $C\rightarrow z\rightarrow A$, $x\rightarrow \{y,z\}\cup A\cup S$, $\{x,z\}\cup C\cup S\rightarrow y$, $y\rightarrow A\cup B$ and $B\cup C\rightarrow x$}.\\

Let $G\in \cal G$. Observe that $G-S$ has no $(x,y)$-path of length 3.\\

\noindent\textbf{Remark 3.} It is interesting that Thomassen \cite{[17]} used  tournaments  of the form $G-S,$ where $G\in \cal G$
to show that there are many  tournaments of order $p$ with irregularity equal to $\frac{1}{5}(p-3)$, which are not 3-strongly panconnected. \\ 

\begin{lemma}\label{le2} Let $T$ be a regular tournament of order  $2n+1$. Suppose that  $S\subseteq V(T)$ and $x$, $y$ are two distinct vertices in   $V(T)\setminus S$. Then the following hold:

(i) If $n\geq 3$ and  $k=|S|\leq \frac{1}{3}(n-1)$, then   $T-S$ contains an $(x,y)$-path of length 3, unless $T$ is isomorphic to a tournament from $\cal G$.

(ii)  If $n\geq 5$ and  $k=|S|\leq \frac{1}{2}n$ and $T-S$ contains no $(x,y)$-path of length 3, then   $T-S$ contains an $(x,y)$-path of length 4. \end{lemma}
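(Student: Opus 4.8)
\medskip

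\noindent\textbf{Proof plan.}
In both parts I work inside $T'=T-S$ and describe the positions of $x$ and $y$. Put $U=N^+(x)\setminus(S\cup\{x,y\})$ and $W=N^-(y)\setminus(S\cup\{x,y\})$, and partition $V(T)\setminus(S\cup\{x,y\})$ into $P_{++}=N^+(x)\cap N^+(y)\setminus S$, $P_{+-}=N^+(x)\cap N^-(y)\setminus S$, $P_{-+}=N^-(x)\cap N^+(y)\setminus S$ and $P_{--}=N^-(x)\cap N^-(y)\setminus S$, so that $U=P_{++}\cup P_{+-}$, $W=P_{--}\cup P_{+-}$, $U\cap W=P_{+-}$, $P_{++}\subseteq U$, $P_{--}\subseteq W$ and $P_{+-}\subseteq U\cap W$. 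The preliminary observation is that ``$T'$ has no $(x,y)$-path of length $3$'' is equivalent to ``$T'$ has no arc from $U$ to $W$ joining two distinct vertices'', and hence to: $|P_{+-}|\le 1$, every vertex of $W$ dominates every vertex of $P_{++}$, and every vertex of $P_{--}$ dominates every vertex of $U$. Regularity of $T$ gives $|U|\ge n-k-1$, $|W|\ge n-k-1$ and $|P_{++}|+|P_{+-}|+|P_{-+}|+|P_{--}|=2n-k-1$, and the bounds on $k$ and $n$ force $|U|,|W|\ge 2$, so $P_{++}$ and $P_{--}$ are nonempty.

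For (ii) the first step is to locate the middle vertex of a length-$4$ path. If $x\to a\to b\to c\to y$ lies in $T'$, then $a\in U$, $c\in W$, and $b$ can lie neither in $P_{++}\subseteq U$ (then $b\to c$ is a forbidden arc from $U$ to $W$), nor in $P_{--}\subseteq W$ (then $a\to b$ is forbidden), nor be the at most one vertex of $P_{+-}\subseteq W$ (again $a\to b$ forbidden); hence $b\in P_{-+}$. Therefore, if $T'$ has no $(x,y)$-path of length $4$, then each $b\in P_{-+}$ either dominates all of $U$ or is dominated by all of $W$; write $P_{-+}=R_a\cup R_c$ accordingly. Summing in-degrees over $P_{++}$ — each of its vertices is dominated by all of $R_a$, all of $W$, and by $x$ and $y$, and $P_{++}$ spans $\frac{|P_{++}|(|P_{++}|-1)}{2}$ internal arcs — gives $n\ge |R_a|+|W|+2+\frac{|P_{++}|-1}{2}$, and symmetrically $n\ge |R_c|+|U|+2+\frac{|P_{--}|-1}{2}$. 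Adding these and substituting $|P_{-+}|=2n-k-1-|U|-|W|+|P_{+-}|$ and $|P_{++}|+|P_{--}|=|U|+|W|-2|P_{+-}|$, all terms cancel down to $|U|+|W|\le 2k-4$; together with $|U|+|W|\ge 2(n-k-1)$ this forces $k\ge\frac{n+1}{2}$, contradicting $k\le\frac n2$. So $T'$ has an $(x,y)$-path of length $4$.

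For (i) I run the same in-/out-degree counts using only the domination relations: each vertex of $P_{++}$ is dominated by $W\cup\{x,y\}$, so $n\ge |W|+2+\frac{|P_{++}|-1}{2}$, and symmetrically $n\ge |U|+2+\frac{|P_{--}|-1}{2}$. Adding and substituting $|P_{++}|+|P_{--}|=|U|+|W|-2|P_{+-}|$ gives $\frac32(|U|+|W|)\le 2n-3+|P_{+-}|$, so with $|U|+|W|\ge 2(n-k-1)$ and $|P_{+-}|\le 1$ one gets $n-3k\le 1$, i.e.\ $k\ge\frac{n-1}{3}$. The hypothesis $k\le\frac{n-1}{3}$ then forces equality throughout: $k=\frac{n-1}{3}$ (so $2n+1=6k+3$), $|P_{+-}|=1$ — write $P_{+-}=\{z\}$ — $|U|=|W|=n-k-1=2k$, and hence (the only way $d^+(x)=d^-(y)=n$ can split) $x\to y$, $x$ dominates $S$, $S$ dominates $y$, $|P_{++}|=|P_{--}|=2k-1$ and $|P_{-+}|=k+2$. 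Finally the structure is read off vertex by vertex from the tightness of the degree sums: every vertex of $P_{++}$ has no in-neighbour in $S\cup P_{-+}$ and exactly $k-1$ in-neighbours inside $P_{++}$, i.e.\ $P_{++}\to S$, $P_{++}\to P_{-+}$ and $P_{++}$ spans a regular tournament; symmetrically $S\to P_{--}$, $P_{-+}\to P_{--}$ and $P_{--}$ is regular; the domination relations already give $P_{--}\to z\to P_{++}$; and a short degree count shows $\{z\}\cup P_{-+}\cup S$ is a regular tournament. Setting $A=P_{++}$, $B=P_{-+}$, $C=P_{--}$ and verifying each clause then identifies $T$ with a tournament of $\cal G$. (When $3\nmid n-1$ the inequality $k\ge\frac{n-1}{3}$ is already incompatible with $k\le\frac{n-1}{3}$, so no such configuration exists, consistent with the regular tournament being strongly panconnected.)

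The step needing genuine care is the equality analysis in (i): extracting every clause of the definition of $\cal G$ — especially the relations involving $S$ and the regularity of $A$, $C$ and $\{z\}\cup B\cup S$ — from the single fact that all the counting inequalities are simultaneously tight. By comparison, translating the ``no $(x,y)$-path of length $3$ (resp.\ $4$)'' hypotheses into the clean domination statements and carrying out the inequality arithmetic are routine.
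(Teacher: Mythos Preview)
Your argument is correct. For part~(i) your approach is essentially the paper's: the paper picks a single vertex $u\in N^+(x,A)\setminus R$ (your $P_{++}$) with in-degree at least the average inside that set, and then bounds $d^-(u)$; you sum in-degrees over all of $P_{++}$ and divide. Both yield the same inequality $n-3k\le 1$, and the equality analysis unwinds in the same way (the regularity of $T\langle P_{++}\rangle$ in your version follows because tightness forces $P_{++}\to S\cup P_{-+}$, after which every $v\in P_{++}$ has $d^+(v,P_{++})=n-k-|P_{-+}|$, constant).

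For part~(ii) your route is genuinely different from the paper's. The paper works with the second out-neighbourhood $N^{+2}(x,A)$: it picks $u\in P_{++}$ of small out-degree inside $P_{++}$, deduces $|N^{+2}(x,A)|\ge n-k-\frac12(|P_{++}|-1)$, observes that the absence of a length-$4$ path forces $P_{--}\to N^{+2}(x,A)$, and then bounds the out-degree of a single vertex in $P_{--}$. You instead classify where the middle vertex $b$ of a hypothetical length-$4$ path must sit, conclude $b\in P_{-+}$, split $P_{-+}=R_a\cup R_c$ according to whether $b\to U$ or $W\to b$, and run a symmetric pair of averaged degree bounds over $P_{++}$ and $P_{--}$. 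Your approach is cleaner in that it avoids the second-neighbourhood notation and is manifestly self-dual; the paper's approach is a bit more asymmetric but reaches the contradiction with one inequality rather than two. Both give the same threshold $k>\frac n2$.

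One small point worth making explicit when you write this out in full: in part~(ii) you need $P_{++}$ and $P_{--}$ nonempty to divide by their sizes. You note this follows from $|U|,|W|\ge n-k-1\ge 2$ and $|P_{+-}|\le 1$; just make sure the final write-up records that $n\ge 5$ and $k\le n/2$ give $n-k-1\ge 2$ (and likewise $n\ge 3$, $k\le(n-1)/3$ in part~(i)).
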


\begin{proof}
 Let $A=V(T)\setminus (\{x,y\}\cup S)$ and $R=N^+(x,A)\cap N^-(y,A)$. If $T-S$ contains no $(x,y)$-path of length 3, then   $|R|\leq 1$, $|A(N^+(x,A)|\geq n-k-1$ and
$$
     A(N^+(x,A)\rightarrow N^-(y,A))=\emptyset ,  \quad \hbox{i.e.,}\quad N^-(y,A)\rightarrow   N^+(x,A)  \eqno (1)
$$
and observe that
$$
|N^+(x,A)\setminus R|\geq n-k-2 \quad \hbox{and} \quad |N^-(y,A)\setminus R|\geq n-k-2.   \eqno (2)
$$ 

 (i) Suppose that $T-S$ contains no $(x,y)$-path of length 3. Note that $T'=T\langle N^+(x,A)\setminus R\rangle$ must contain a vertex $u$ with $d^-(u,T')\ge (|V(T')|-1)/2,$ the average indegree in $T'$. 
 Using (1) and (2), we obtain that the following holds for $d^-(u)$:
$$
n=d^-(u)\geq |\{x,y\}|+|N^-(y,A)|+(|N^+(x,A)\setminus R|-1)/2 $$ $$ \geq n-k+1+(n-k-3)/2= n+(n-3k-1)/2. \eqno(3)
$$
Therefore, $n-3k-1\leq 0$, i.e., $3k=n-1$ as $3k\leq n-1$, and all the inequalities that led to (3) in fact are equalities. This means that $|N^+(x,A)\setminus R|= 
 n-k-2$ (by the digraph duality, $| N^-(y,A)\setminus R|= n-k-2$), $|R|=1$, $|B|=k+2$, where $B=A\setminus (N^+(x,A)\cup N^-(y,A))$,
 $y\rightarrow B \rightarrow x$, $x\rightarrow S \rightarrow y$,
  $T\langle N^+(x,A)\setminus R\rangle$ is a regular tournament (by the digraph duality, $T\langle N^-(y,A)\setminus R\rangle$  is also a regular tournament), $N^+(x,A)\setminus R\rightarrow S\cup B \rightarrow N^-(y,A)\setminus R$, $y \rightarrow N^+(y,A)\setminus R$,      $N^-(x,A)\setminus R\rightarrow  x$, $xy\in A(T)$ and $T\langle S\cup B\cup R\rangle$ is a regular tournament. Therefore, $T$ is isomorphic to a tournament of the type $\cal G$.
  
  (ii) By contradiction, suppose that $T-S$ contains no $(x,y)$-path of lengths 3 and  4. By Lemma \ref{le1}, we know that there is a vertex $u\in N^+(x,A)\setminus R$ such that $d^+(u, N^+(x,A)\setminus R)\leq (|N^+(x,A)\setminus R|-1)/2$. Consider the out-neighbors of $u$. Since $d^+(u)=n$ and $u$ can not dominate $x$ and $y$, the number of out-neighbors of $u$ in $A$ is at least $n-k$. Further the out-neighbors of $u$ which are contained in $N^+(x,A)\setminus R$ are all in $N^{+2}(x,A)$.
  This together with (1) implies that 
$$
|N^{+2}(x,A)|\geq n-k-(|N^+(x,A)\setminus R|-1)/2=(2n-2k- |N^+(x,A)\setminus R|+1)/2. 
$$
Since $T-S$ contains no $(x,y)$-path of length 4, it follows that $N^-(y,A)\setminus R \rightarrow N^{+2}(x,A))$. This together with (1), (2) and the last inequality implies that for some vertex $v\in N^-(y,A)\setminus R$ the following holds
$$
d^+(v)\geq |\{x,y\}|+|N^+(x,A)|+(|N^{+2}(x,A)|+(|N^-(y,A)\setminus R|-1)/2  \geq 2n-2k+1.
$$
Hence, $2k\geq n+1$, which is a contradiction. This completes the proof of the lemma.\end{proof} 

 For $n=3$ and  $n=4$, it is not difficult to construct tournaments for which Lemma 3.2(ii) is not true. To see this, we consider the following examples of regular tournaments of order 7 and 9.
 
 Let $T$ be a regular tournament of order 7 (of order 9) with vertex set $V(T)=\{x,y,z\}\cup B\cup S$, where 
 $|S|=1$, $|B|=3$ (respectively, $|S|=2$, $|B|=4$). The tournament $T$ satisfies the following conditions:
 
 $A(T)$ contains the arcs $xy$, $xz$ and $zy$,
  $x\rightarrow S\rightarrow y$, $y\rightarrow B\rightarrow x$ and $T\langle \{z\}\cup B\cup S\rangle$ is a regular tournament. It is easy to check that $|S|\leq n/2$ and $T-S$ contains no $(x,y)$-path of lengths 3 and 4.
 
 Note that if we in Lemma \ref{le2}(ii), $\frac{1}{2}n$ replace with $\frac{1}{2}(n+1)$, then there is a regular tournament 
 $T$ of order $2n+1=11$ ($2n+1=15$) which contains a subset $S\subseteq V(T)$ with $|S|=k=\frac{1}{2}(n+1)$ and two distinct vertices $x,y\in V(T-S)$ such that $T-S$ contains no path of lengths 3 and 4. To see this we define a regular tournament $T$ of order $2n+1=11$ ($2n+1=15$) as follows:
 
 (a) Let $T$ be a regular tournament of order 11 with $V(T)=\{x,y,z\}\cup A\cup S$ such that $|A|=5$, $k=|S|=3$, 
 $x\rightarrow S\rightarrow y$, 
 $y\rightarrow A\rightarrow x$, the arcs $xy,xz, zy$ are in $A(T)$ and $T\langle \{z\}\cup A\cup S\rangle$ is a regular tournaments.
 
 It is easy to check that $T-S$ contains no $(x,y)$-path of length greater or equal to 3.
 
 (b) Let $T$ be a regular tournament of order 15 with $V(T)=\{x,y,z, u,v\}\cup A\cup B\cup S$ such that $|A|=|B|=3$, $k=|S|=4$, $S=\{a_1,a_2,b_1,b_2\}$, 
 the arcs $xy,xz, zy,xu, vy, vu,vz,\\ zu,a_1a_2,b_1b_2, a_2b_2,a_2b_1, a_1b_1, b_2a_1$ are in $A(T)$, $x\rightarrow S\rightarrow y$,
$\{y,z,u,v\} \rightarrow A$, 
$B\rightarrow \{x,u,v,z\}$, 
$y\rightarrow \{u\} \cup B$,
$A\rightarrow \{x\}\cup B$,
$v\rightarrow \{x\}\cup A$,
$S\rightarrow v$,
$u\rightarrow  A\cup S$, the induced subtournaments 
$T\langle A\rangle$ and $T\langle B\rangle$ are regular tournaments, 
$\{b_1,b_2\}\rightarrow A\rightarrow \{a_1,a_2\}$,
$\{a_1,a_2\}\rightarrow B\rightarrow \{b_1,b_2\}$ and
$\{b_1,b_2\}\rightarrow z\rightarrow \{a_1,a_2\}$. 

It is not difficult to check that $T-S$ contains no $(x,y)$-path of length 3 and 4. Note that $T-S$ contains an $(x,y)$-path of every length $5,6,\ldots , 10$.

In Lemmas \ref{le3} and \ref{le4}, we suppose that $P:= x_0x_1\ldots x_r$ is an $(x_0,x_r)$-path
of length $r$ in a tournament $T$ and $z$ is a vertex in $V(T)\setminus V(P)$ such  that $
\{x_{\alpha +1}, x_{\alpha +2},\ldots , x_r\}\rightarrow z \rightarrow \{x_0,x_1,\ldots , x_{\alpha}\}$, where $\alpha \in [2,r-3]$.  Moreover, any $(x_0, x_r)$-path of length $r + 1$ with vertex set $\{z\}\cup  V (P)$ is
denoted by $Q,$ and we assume that $T$ contains no such a path $Q$.

\begin{lemma}\label{le3} Suppose that $x_sx_t\in A(T)$ with
 $s\in [1,\alpha-1]$ and $t\in [\alpha+3,r]$. Then $A(\{x_0,x_1,\ldots , x_{s-2}\}\rightarrow \{x_{\alpha+2}, x_{\alpha+3}, \ldots , x_{t-1}\})=\emptyset$ when $s\geq 3$, and  $$A( x_{s-1}\rightarrow \{x_{\alpha+2}, x_{\alpha+3}, \ldots , x_{t-1}\})=\emptyset, \quad  when \quad t-s\not=5.$$\end{lemma}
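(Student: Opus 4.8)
I would argue by contradiction. Suppose that $x_ix_j\in A(T)$ with $j\in[\alpha+2,t-1]$ and with $i\in[0,s-2]$ (the first case, where we also have $s\ge 3$) or $i=s-1$ (the second case, where $t-s\ne 5$). I will produce an $(x_0,x_r)$-path $Q$ of length $r+1$ with vertex set $\{z\}\cup V(P)$, contradicting the standing assumption of the section.

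The starting point is that, since $z$ has all of its out-neighbours among $\{x_0,\ldots,x_\alpha\}$ and all of its in-neighbours among $\{x_{\alpha+1},\ldots,x_r\}$, we have $x_{t-1}\to z$ (because $t-1\ge\alpha+1$) and $z\to x_{i+1}$ (because $i+1\le\alpha$). Hence the sequence
$$Q_0:=x_0x_1\cdots x_i\ x_jx_{j+1}\cdots x_{t-1}\ z\ x_{i+1}x_{i+2}\cdots x_s\ x_tx_{t+1}\cdots x_r$$
is a directed path: it uses the arcs $x_ix_j$ and $x_sx_t$ (our hypotheses), the two arcs just noted, and otherwise only arcs of $P$. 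Comparing vertex sets, $Q_0$ misses exactly the vertices of $W:=\{x_{s+1},x_{s+2},\ldots,x_{j-1}\}$, and $W\ne\emptyset$ because $j\ge\alpha+2\ge s+3$.

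It therefore suffices to insert the sub-path $x_{s+1}x_{s+2}\cdots x_{j-1}$ of $P$ (with end-vertices $x_{s+1}$ and $x_{j-1}$) between two consecutive vertices $u,v$ of $Q_0$ with $u\to x_{s+1}$ and $x_{j-1}\to v$. Several slots are available in principle: between $x_i$ and $x_j$ (this needs $x_i\to x_{s+1}$, since $x_{j-1}\to x_j$ is a path arc), between $x_{t-1}$ and $z$ (needs $x_{t-1}\to x_{s+1}$, since $x_{j-1}\to z$), between $x_s$ and $x_t$ (needs $x_{j-1}\to x_t$, since $x_s\to x_{s+1}$), and between $z$ and $x_{i+1}$ (needs $x_{j-1}\to x_{i+1}$, since $z\to x_{s+1}$); in the first case one also has the interior slots of the prefix $x_0\cdots x_i$. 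The plan is to show that if all of these arcs point the "wrong" way, then a batch of arcs is forced — for instance, running the same rerouting with the single arc $x_sx_t$ forces $x_{s+1}\to x_k$ for every $k\le s-1$ — and that these forced arcs either enable a slightly different rerouting that yields a valid $Q$ or contradict the hypotheses. This last step is exactly where $s\ge 3$ is used in the first case (so that the prefix $x_0\cdots x_i$ supplies enough slots) and where $t-s\ne 5$ is used in the second case: when $t-s=5$ the set $W$ has only two or three vertices and the index arithmetic around $x_{s-1},x_s,x_{s+1}$ becomes too tight to force a usable slot. I also expect the principle of digraph duality to cut the number of sub-cases roughly in half.

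The main obstacle, I expect, is precisely the reinsertion of the leftover block $W=\{x_{s+1},\ldots,x_{j-1}\}$: in $P$ it lies strictly between $x_s$ and $x_j$, so it is "jumped over" by both shortcut arcs $x_sx_t$ and $x_ix_j$ and is never covered for free; organizing the case analysis of how $W$ can fail to be reinserted, and isolating the exceptional configurations $s\le 2$ and $t-s=5$, is the technical heart of the proof.
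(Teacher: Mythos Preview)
Your setup is correct and matches the paper: assume $x_ax_b\in A(T)$ with $a\in[0,s-1]$, $b\in[\alpha+2,t-1]$, build the ``skeleton'' path
\[
Q_0=x_0\cdots x_a\,x_b\cdots x_{t-1}\,z\,x_{a+1}\cdots x_s\,x_t\cdots x_r,
\]
and note that it misses exactly $W=\{x_{s+1},\ldots,x_{b-1}\}$. Your slot ``between $z$ and $x_{a+1}$'' is precisely the paper's Fact~3.1(i), which forces $x_{a+1}x_{b-1}\in A(T)$.

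The genuine gap is your insertion strategy. You propose to reinsert $W$ \emph{as a single contiguous block} at one of four slots of $Q_0$. This is too rigid to finish the argument. The paper's proof repeatedly uses reroutings that either \emph{split} $W$ into two sub-blocks inserted at different places, or abandon $Q_0$ altogether in favour of a different base path that does not use the arc $x_ax_b$. For example, to force $x_{s+1}x_{a+1}\in A(T)$ when $a\le s-2$ (Fact~3.1(ii)) the paper uses
\[
Q=x_0\cdots x_{a+1}\,x_{s+1}\cdots x_{t-1}\,z\,x_{a+2}\cdots x_s\,x_t\cdots x_r,
\]
which is not an insertion of $W$ into your $Q_0$ at all; and the propagation step (Fact~3.2) uses paths such as
\[
x_0\cdots x_a\,x_b\cdots x_{t-1}\,x_{i+1}\cdots x_{b-1}\,z\,x_{s+1}\cdots x_i\,x_{a+1}\cdots x_s\,x_t\cdots x_r,
\]
where $W$ is cut at $x_i$ and the two halves are placed on opposite sides of $z$. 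None of your four slots produces these.

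The technical heart you are missing is the \emph{parity structure} the paper extracts from this propagation: once $x_{s+1}x_{a+1}$ (or $x_{s+2}x_s$ when $a=s-1$) is forced, Fact~3.2 shows that if $x_ix_{a+1}\in A(T)$ with $i\in[s+1,b-3]$ then $x_{i+2}x_{a+1}\in A(T)$, and since $x_{a+1}x_{b-1}\in A(T)$ this yields the alternation
\[
\{x_{s+1},x_{s+3},\ldots,x_{b-2}\}\to x_{a+1}\to\{x_{s+2},x_{s+4},\ldots,x_{b-1}\}
\]
(and the analogous pattern when $a=s-1$). Only with this alternation in hand does the remaining case split become finite and tractable; in particular it is here, not earlier, that the exception $t-s=5$ (via $t-s\ge 6$ in the case $a=s-1$) actually enters. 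Your outline gives no mechanism to generate this alternation, and a pure single-block insertion cannot produce it. Digraph duality also does not help here: the two assertions of the lemma (indices $\le s-2$ versus index $s-1$) are not dual to each other in the relevant sense.
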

 
\begin{proof} By contradiction, suppose that there exist integers $a\in [0,s-1]$ and $b\in [\alpha+2,t-1]$ such that $x_ax_b\in A(T)$. Observe that $\{x_{b-1},x_{t-2},x_{t-1}\}\rightarrow z \rightarrow \{x_{a+1},x_{a+2},x_{s+1}\}$. Note that $t-s\geq 4$. Now we prove the following facts.

{\it Fact 3.1}. (i)  $x_{a+1}x_{b-1}\in A(T)$.

(ii) If $a\leq s-2$, then $x_{s+1}x_{a+1}\in A(T)$ and if $a=s-1$, then $x_{s+2}x_s\in A(T)$ and $t-s\geq 6$.



{\it Proof}. (i). Indeed, if $x_{b-1}x_{a+1}\in A(T)$, then $Q=x_0x_1\ldots x_ax_b\ldots x_{t-1}zx_{s+1}\ldots x_{b-1}\\x_{a+1}\ldots x_sx_t\ldots x_r$, a contradiction.

(ii).  If $a\leq s-2$ and  $x_{a+1}x_{s+1}\in A(T)$, then  $$Q=x_0x_1\ldots x_{a+1}x_{s+1}\ldots x_{t-1}zx_{a+2}\ldots x_{s}x_{t}\ldots  x_r.$$ If $a=s-1$ and $x_sx_{s+2}\in A(T)$. Then either $x_{s+1}x_{t-1}\in A(T)$ and  $$Q=x_0x_1\ldots  x_sx_{s+2}\ldots x_{t-2}zx_{s+1}x_{t-1}  \ldots  x_r$$ or $x_{t-1}x_{s+1}\in A(T)$ and $Q=x_0\ldots x_{s-1}x_b\ldots x_{t-1}x_{s+1}\ldots  x_{b-1}zx_sx_t\ldots x_r$. Thus, in both cases we have a contradiction.

 It is easy to see that from  $x_sx_{b-1}\in A(T)$ and $x_{s+2}x_s\in A(T)$ it follows  that $b-1\geq s+3$ and hence, $t-s\geq 6$ since $t-s\not=5$ when $a=s-1$.

{\it Fact 3.2.} If $x_{i}x_{a+1}\in A(T)$ with $i\in [s+1,b-3]$, then $x_{i+2}x_{a+1}\in A(T)$.
 
 {\it Proof}. To prove it by contradiction, suppose that $x_{i}x_{a+1}\in A(T)$  with $i\in [s+1,b-3]$  and $x_{a+1}x_{i+2}\in A(T)$.  If $x_{t-1}x_{i+1}\in A(T)$, then $$Q=x_0x_1\ldots x_{a}x_b\ldots x_{t-1}x_{i+1}\ldots x_{b-1}\\zx_{s+1}\ldots x_{i}x_{a+1}\ldots x_sx_t\ldots x_r,$$ and if $x_{i+1}x_{t-1}\in A(T)$, then $Q=x_0x_1\ldots x_{a+1}x_{i+2}\ldots x_{t-2}zx_{a+2}\ldots x_{i+1}x_{t-1}\ldots x_r$,   a contradiction. 
 
 Using Fact 3.2, it is not difficult to see that there is no $i\in [s+1,b-3]$ such that $\{x_i,x_{i+1}\}\rightarrow x_{a+1}$, i.e., $d^-(x_{a+1},\{x_i,x_{i+1}\})\leq 1$ (for otherwise, we obtain that $x_{b-1}x_{a+1}\in A(T)$, contradicting Fact 3.1 that  $x_{a+1}x_{b-1}\in A(T)$). By Fact 3.1 we have that if $a\leq s-2$, then $x_{s+1}x_{a+1}\in A(T)$, and  $a= s-1$, then $x_{s+2}x_{s}\in A(T)$.  This together with  $d^-(x_{a+1},\{x_i,x_{i+1}\})\leq 1$ and  Fact 3.2 implies that\\ 
$$
\hbox{if} \,\, a\leq s-2, \,\, \hbox{then}\,\, \{x_{s+1}, x_{s+3},\ldots , x_{b-2}\}\rightarrow x_{a+1}\rightarrow \{x_{s+2}, x_{s+4},\ldots , x_{b-1}\},
$$
$$
\hbox{if} \,\, a=s-1, \,\, \hbox{then}\,\, \{x_{s+2}, x_{s+4},\ldots , x_{b-2}\}\rightarrow x_{s}\rightarrow \{x_{s+1}, x_{s+3},\ldots , x_{b-1}\}. \eqno (4)
$$
Thus, in both cases we have $x_{b-2}x_{a+1}\in A(T)$. Now using (4), we obtain, 
if  $x_{t-1}x_{b-1}\in A(T)$, then $b\leq t-2$ and $Q=x_0\ldots x_ax_b\ldots x_{t-1}x_{b-1}zx_{s+1}\ldots x_{b-2} x_{a+1}\ldots  x_sx_t\ldots x_r$, and if $b\leq t-2$ and  $x_{b-1}x_{t-1}\in A(T)$, then $Q=x_0\ldots x_ax_b \ldots x_{t-2}zx_{a+1}\ldots x_{b-1} x_{t-1}\ldots x_r$, thus in both cases we have a contradiction.  
We may therefore assume that $b=t-1$. In this case, if $x_bx_{s+1}\in A(T)$, then $Q=x_0x_1\ldots x_ax_bx_{s+1}\ldots x_{b-1}zx_{a+1}\ldots  x_sx_t\ldots x_r$, a contradiction. Therefore, we may assume that $x_{s+1}x_b\in A(T)$. This follows that if $a\leq s-2$, then   $Q=x_0\ldots x_{a+1}x_{s+2}\ldots x_{b-1}zx_{a+2}\ldots x_{s+1}x_b\ldots x_r$, a contradiction. 
Assume that $a=s-1$. Then by Fact 3.1(ii), $t-s\geq 6$. 
 Let $t\geq \alpha+4$. It is easy to see that if $x_{b-1}x_{s+2}\in A(T)$, then by (4), $x_{a+1}x_{s+2}\in A(T)$ and $Q=x_0x_1\ldots x_sx_{b-1}x_{s+2}\ldots x_{b-2}zx_{s+1}x_b\ldots x_r$, and if $x_{s+2}x_{b-1}\in A(T)$, then $Q=x_0x_1\ldots x_sx_{s+3}\ldots x_{b-2}zx_{s+1}x_{s+2}x_{b-1}\ldots x_r$. 
 Let now
$t= \alpha+3$. Then $b=\alpha+2$ and $z\rightarrow \{x_{s+1},x_{s+2},\ldots , x_{\alpha}\}$. From $t-s\geq 6$ and (4)  it follows that $t-s\geq 7$ and $\alpha \geq s+4$. From (4) we also have that $x_{b-4}x_s\in A(T)$. 
Therefore,
 if $x_{b}x_{b-3}\in A(T)$, then $Q=x_0x_1\ldots x_{s-1}x_bx_{b-3}x_{b-2}x_{b-1}zx_{s+1}\ldots x_{b-4}x_sx_t\ldots x_r$, a contradiction. We may therefore assume that $x_{b-3}x_{b}\in A(T)$. If $x_{b-2}x_{s+1}\in A(T)$, then  $Q=x_0x_1\ldots x_{s}x_{b-1}zx_{b-2}x_{s+1}\ldots x_{b-3}x_b\ldots x_r$, a contradiction.  Thus, we have that $x_{b-3}x_{b}\in A(T)$  and $x_{s+1}x_{b-2}\in A(T)$. Therefore,  $Q=x_0x_1\ldots x_{s+1}x_{b-2}x_{b-1}zx_{s+2}\ldots x_{b-3}x_b\ldots x_r$, a contradiction. 
Thus, in all cases we have a contradiction.  \end{proof}

\begin{lemma}\label{le4} Suppose that 
  $x_sx_t\in A(T)$ with  $s\in [\alpha , r-2]$ and  $t\in [s+2,r]$. If $k=\lfloor \frac{1}{2}(t-s)\rfloor$, then 
$A(\{x_{0}, x_{1},\ldots , x_{\alpha -1}\}\rightarrow 
\{x_{s+1}, x_{s+2},\ldots , x_{s+k}\})=\emptyset$.\end{lemma}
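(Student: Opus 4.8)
The plan is to argue by contradiction and to induct on $d:=t-s$, assuming the statement of the lemma for every configuration whose distinguished arc has span smaller than $d$. So I would assume that $x_ax_j\in A(T)$ for some $a\in[0,\alpha-1]$ and some $j\in[s+1,s+k]$, where $k=\lfloor d/2\rfloor$; since $d\ge2$ this forces $j\le s+d/2\le t-1$, so $x_j$ lies in $\{x_{s+1},\ldots,x_{t-1}\}$, a set each of whose vertices dominates $z$ (because $s+1\ge\alpha+1$). Let $T':=T\langle\{x_{s+1},\ldots,x_{t-1}\}\rangle$. The key structural observation I would record is that, because $T'$ contains the spanning path $x_{s+1}x_{s+2}\cdots x_{t-1}$, its initial strong component $W'$ is an initial segment of that path: either $W'=\{x_{s+1},\ldots,x_{t-1}\}$, or $W'=\{x_{s+1},\ldots,x_{m-1}\}$ for some $m\in[s+2,t-1]$, in which case $\{x_{s+1},\ldots,x_{m-1}\}\to\{x_m,\ldots,x_{t-1}\}$. (The source $x_{s+1}$ of the spanning path must lie in $W'$; and if $x_p\notin W'$ then $x_{p+1}\notin W'$, for otherwise $x_{p+1}\in W'$ would give $x_{p+1}\to x_p$, contradicting the arc $x_px_{p+1}$.)

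I would then split into two cases. If $x_j\in W'$, then $T'$ has a Hamiltonian path $h_1h_2\cdots h_M$ with $h_1=x_j$ (a vertex of a tournament starts a Hamiltonian path precisely when it belongs to the initial strong component). One checks that
$$Q:\quad x_0x_1\cdots x_a\;\; h_1h_2\cdots h_M\;\; z\;\; x_{a+1}x_{a+2}\cdots x_s\;\; x_tx_{t+1}\cdots x_r$$
is an $(x_0,x_r)$-path: $x_a\to x_j=h_1$ by assumption, $h_M$ dominates $z$, $z\to x_{a+1}$ since $a+1\le\alpha\le s$, the arc $x_sx_t$ is given, and the rest are arcs of $P$. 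Its vertex set is exactly $V(P)\cup\{z\}$ and its length is $r+1$, so $Q$ is a forbidden path, a contradiction.

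The remaining case $x_j\notin W'$ is the heart of the argument. Here $W'=\{x_{s+1},\ldots,x_{m-1}\}$ with $s+2\le m\le j$. Since $j\ge s+2$ we have $k\ge2$, hence $d\ge4$ and $m\le j\le s+d/2=(s+t)/2\le t-2$; consequently $x_{m-1}x_{t-1}\in A(T)$, $m-1\in[\alpha,r-2]$, $t-1\in[m+1,r]$, and this arc has span $(t-1)-(m-1)=t-m\le d-2<d$. Applying the induction hypothesis to $x_{m-1}x_{t-1}$ yields $A(\{x_0,\ldots,x_{\alpha-1}\}\to\{x_m,\ldots,x_{m-1+\lfloor(t-m)/2\rfloor}\})=\emptyset$. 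But $m\le j$, and from $2j\le s+t$ and $m\ge s+2$ one gets $2(j-m+1)=2j-2(m-1)\le(s+t)-2(m-1)\le t-m$, so $j\le m-1+\lfloor(t-m)/2\rfloor$; this contradicts $x_ax_j\in A(T)$. (The base cases $d\le3$ are covered too: there $k=1$, so $j=s+1$, and $x_{s+1}\in W'$ always, so the first case applies and uses no induction hypothesis.)

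The step I expect to be the main obstacle is the second case. The ``crack'' vertices $x_{s+1},\ldots,x_{j-1}$ — those of index below $j$ skipped by the arc $x_ax_j$ — cannot be slotted into a rerouting $Q$ directly; rather than confront this, one uses the strong-component structure of $T'$ to extract a genuinely shorter arc $x_{m-1}x_{t-1}$ of exactly the same type and recurses. The delicate point is the index bookkeeping showing that $j$ still lies within the range guaranteed for that shorter arc, and it is precisely here that the bound $k=\lfloor(t-s)/2\rfloor$ is used in an essential way.
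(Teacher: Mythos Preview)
Your argument is correct, and it takes a genuinely different route from the paper's own proof. Both proofs induct on the span $d=t-s$, but the dichotomies differ. The paper splits on the single arc between $x_{s+1}$ and $x_{t-1}$: if $x_{t-1}x_{s+1}\in A(T)$, it writes down one explicit cyclic rerouting $Q=x_0\cdots x_a\,x_j\cdots x_{t-1}\,x_{s+1}\cdots x_{j-1}\,z\,x_{a+1}\cdots x_s\,x_t\cdots x_r$ that works for every target index $j\in[s+2,t-1]$; if instead $x_{s+1}x_{t-1}\in A(T)$, it recurses on that arc, whose span is exactly $d-2$. Your version replaces this ad hoc dichotomy with a structural one on the initial strong component $W'$ of $T'=T\langle\{x_{s+1},\dots,x_{t-1}\}\rangle$: when $x_j\in W'$ you invoke the (Camion--R\'edei) fact that a tournament has a Hamiltonian path starting at any vertex of its initial strong component, which packages all possible reroutings into one line; when $x_j\notin W'$ you extract the arc $x_{m-1}x_{t-1}$ from the component boundary and recurse on it. The paper's argument is the more elementary of the two---it needs no auxiliary tournament theory beyond the explicit paths it writes down---while yours is more conceptual and makes the induction drop by $m-s\ge 2$ rather than exactly $2$. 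Your index bookkeeping in Case~2 (that $m\le j$ together with $2j\le s+t$ and $m\ge s+2$ force $j\le m-1+\lfloor (t-m)/2\rfloor$) is correct and is exactly where the hypothesis $k=\lfloor(t-s)/2\rfloor$ is tight, as you note.
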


\begin{proof} The proof by induction on $k= \lfloor \frac{1}{2}(t-s)\rfloor$.  Observe that $\{x_{s+1}, x_{t-1}\} \rightarrow z$. For the base step, it is easy to see that if $x_{i}x_{s+1}\in A(T)$ with $i\in [0,\alpha -1 ]$, then $Q= x_0x_1\ldots  x_i x_{s+1}\ldots x_{t-1}zx_{i+1}\ldots x_sx_t\ldots x_r$, a contradiction. We may therefore assume that
$A(\{x_{0}, x_{1},\ldots , x_{\alpha -1}\}\rightarrow 
x_{s+1})=\emptyset$. This means that
 if $2\leq t-s\leq 3$, then  the lemma is true. Assume that $t-s\geq 4$. For the inductive step, we assume that  if $x_{s_1}x_{t_1}\in A(T)$ with $s_1\in [\alpha , r-2]$,  $t_1\in [s_1+2,r]$ and $t_1-s_1 < t-s$, then 
$A(\{x_{0}, x_{1},\ldots , x_{\alpha -1}\}\rightarrow 
\{x_{s_1+1}, x_{s_1+2},\ldots,  x_{s_1+k_1}\})=\emptyset$, where  $k_1=\lfloor \frac{1}{2}(t_1-s_1)\rfloor$.  

If $x_{t-1}x_{s+1}\in A(T)$, then for all $i\in [1,\alpha -1]$ and $j \in [s+2,t-1]$, 
$x_{i}x_{j}\notin    A(T)$ and we are done (for otherwise, if $x_{i}x_{j}\in    A(T)$, then $Q= x_0x_1\ldots  x_i x_{j}\ldots x_{t-1}x_{s+1}\ldots x_{j-1}z\\x_{i+1}\ldots x_sx_t \ldots x_r$, a contradiction). Therefore, $A(\{x_0,x_1,\ldots , x_{\alpha -1}\rightarrow \{x_{s+1},x_{s+2},\ldots ,\\ x_{t-1}\})=\emptyset$ and we are done.
Now assume  that $x_{s+1}x_{t-1}\in A(T)$. Then $t-1-(s+1)<t-s$ and, by the induction hypothesis,  $A(\{x_{0}, x_{1},\ldots , x_{\alpha -1}\}\rightarrow 
\{x_{s+2}, \ldots , x_{s+1+m}\})=\emptyset$, where $m=\lfloor (t-s-2)/2\rfloor$. Thus, $A(\{x_{0}, x_{1},\ldots , x_{\alpha -1}\}\rightarrow 
\{x_{s+1}, x_{s+2},\ldots , x_{s+1+m}\})=\emptyset$. This implies that Lemma \ref{le4} is true since $m+1=\lfloor \frac{1}{2}(t-s)\rfloor$.  \end{proof}

\section {Proof of the main result}

For convenience of the reader, we restate it here.\\

\noindent\textbf{Theorem \ref{th4}.} {\it Let $T$ be a regular tournament of order $2n+1\geq 11$ and let $S$ be a subset  in $V(T)$. Let  $|S|\leq  \frac{1}{2}(n-2)$ and $x,y$ 
 be two distinct vertices in $V(T)\setminus S$. If 
   $T- S$ contains an $(x,y)$-path of length $r$, where $r\in [3, 2n-|S|-1]$, then 
    $T- S$ also contains an $(x,y)$-path of length $r+1$}.
   
\begin{proof} Recall that $P = x_0x_1 \dots  x_r$ is a path of length $r$ in $T$
and $k=|S|.$

Observe that for any vertex $x\in V(T-S)$, $d^-(x,V(T-S))\geq n-k$ and $d^+(x,V(T-S))\geq n-k$. Now by  $Q$ we denote any  $(x_0,x_r)$-path of length $r+1$ in $T- S$.   Suppose that $T-S$ has no such path $Q$.  
Let $A=V(T)\setminus (V(P)\cup S)$. We will prove a series of claims (Claims 1-13).

\textbf{Claim 1:} $N^-(x_r, A)=N^+(x_0, A)=\emptyset$, i.e., 
$d^-(x_r,A)= d^+(x_0,A)=0$.

\textbf{Proof:} By the digraph duality, it suffices to prove that  $N^-(x_r, A)=\emptyset$. Suppose, on the contrary, that  $N^-(x_r, A)\not=\emptyset$. Let $z\in N^-(x_r, A)$, i.e., $zx_r\in A(T)$. Then $z\rightarrow \{x_0,x_1, \ldots , x_r\}$,  $r\leq n-1$ and $|N^-(z,A)|\geq n-k$. It is easy to see that
$$
A(\{x_0,x_1, \ldots , x_{r-2}\}\rightarrow N^-(z,A)\cup \{z\})=\emptyset.  \eqno (5)
$$
We distinguish the following two cases depending on $r$.

\textbf{Case 1.1.} $r\geq n-k$.
 We know that $N^-(z,A)$ contains a vertex $u$ such that $d^-(u,N^-(z,A))\leq 0.5(d^-(z,A)-1)$. This together with (5) implies that 
$$ 
|N^{-2}(z,A)|\geq |N^{-2}(z,A)\cap N^-(u)|\geq n-k-2-0.5(d^-(z,A)-1).    \eqno (6)
$$
It is easy to see that $A(\{x_0,x_1, \ldots , x_{r-3}\}\rightarrow N^{-2}(z,A))=\emptyset$. This together with (5) implies that $\{z\} \cup N^-(z,A)\cup N^{-2}(z,A) \rightarrow \{x_0,x_1, \ldots , x_{r-3}\}$. Therefore, $\{x_0,x_1, \ldots , x_{r-3}\}$ contains a vertex $v$ such that $d^-(v)\geq |N^-(z,A)|+ |N^{-2}(z,A)|+1+ 0.5(r-3)$ and unless $\{x_0,x_1,\dots ,x_{r-3}\}$ induces a regular tournament, we can find a vertex $v$ so that equality does not hold. Now using (6), $r\geq n-k$ and the fact that $|N^-(z,A)|\geq n-k$, we obtain
$$n=d^-(v)\geq d^-(z,A)+n-k-2-0.5(d^-(z,A)-1)+0.5(r-1)\geq 2n-2k-2.$$
 Therefore, $2k=n-2$ (as $2k\leq n-2$) and all inequalities which were used in
the last inequality in fact are equalities, in particular, $T\langle\{x_0,x_1,\ldots , x_{r-3}\}\rangle$ is a regular tournament.  Therefore, $\{x_0,x_1,\ldots , x_{r-3}\}\rightarrow  x_{r-2}$. Hence, 
 \begin{eqnarray*}
  d^-(x_{r-2}) & \geq & |N^-(z,A)|+|\{z\}|+|\{x_0,x_1,\ldots , x_{r-3}\}|\\
  & \ge & n-k+1+r-2\\
  &\ge & n-k+1+n-k-2\\
  &=& 2n-2k-1\\
  &\ge& 2n -(n-2)-1\\
  &=& n+1,
 \end{eqnarray*} 
 a contradiction.
 

\textbf{Case 1.2.} $r\leq n-k-1$. Then $N^+(x_0,A)\not= \emptyset$. It is easy to see that   
$$
N^+(x_0,A)\cap N^-(z,A)= A(N^+(x_0,A)\rightarrow  N^-(z,A)\cup \{z,x_1,x_2,\ldots ,x_r\})=\emptyset.  \eqno (7) 
$$
We know that $N^+(x_0,A)$ contains a vertex $u$ such that $d^+(u,N^+(x_0,A)\leq 0.5(d^+(x_0,A)-1)$. From this and (7) it follows that
$|N^{+2}(x_0,A)|\geq n-k-0.5(d^+(x_0,A)-1)$.  It is not difficult to see that $N^-(x_r,A)\rightarrow  \{x_0,x_1,\ldots ,x_r\}\cup N^+(x_0,A) \cup N^{+2}(x_0,A)$. Therefore, $N^-(x_r,A)$ contains a vertex $y$ such that 
$$
n=d^+(y)\geq |N^+(x_0,A)| + |N^{+2}(x_0,A)|+ 0.5(|N^-(x_r,A)|-1)+r+1.
 $$
Now, since $\min\{|N^+(x_0,A)|, |N^-(x_r,A)|\}\geq n-k-r$,  we obtain that $n=d^+(y)\geq 2n-2k+1$. Hence, $2k\geq n+1$, which contradicts that $2k\leq n-2$. Claim 1 is proved. \fbox \\\\

Using Claim 1, $n\geq 5$ and $2k\leq n-2$, we obtain   that $|A|\leq n$, $r=2n-k-|A| \geq n-k\geq 3.5$ (i.e., $r\geq 4$) and 
 $$
 \min\{|N^+(x_0,V(P))|, |N^-(x_r,V(P))|\}\geq n-k. \eqno (8)
 $$
 
 \textbf{Claim 2:} $N^+(x_1,A)= N^-(x_{r-1},A)=\emptyset$.
 
 \textbf{Proof:} By the digraph duality, it suffices to prove that  $N^-(x_{r-1}, A)=\emptyset$. Suppose, on the contrary, that $N^-(x_{r-1}, A)\not=\emptyset$. Let $z$ be a vertex in $N^-(x_{r-1}, A)$. Then  $zx_{r-1}\in A(T)$, $z\rightarrow \{x_0,x_1,\ldots , x_{r-1}\}$ and   $|N^-(z, A)|\geq n-k-1$. 
 
\textbf{Case 2.1.} $N^-(z, A)\rightarrow x_{r-1}$. Then
$$
A(\{x_0,x_1, \ldots , x_{r-3}\}\rightarrow \{z\} \cup N^-(z,A)\cup N^{-2}(z,A))=\emptyset.  \eqno (9)
$$
Since $N^-(z,A)$ contains a vertex $u$ such that
 $d^-(u,N^-(z,A))\leq 0.5(|N^-(z,A)|-1)$,  it follows that $|N^{-2}(z,A)|\geq n-k-1-0.5(|N^-(z,A)|-1)$. Now using (9) and $r\geq n-k$, we obtain that  $\{x_0,x_1, \ldots , x_{r-3}\}$ contains a vertex $v$ such that
    $$
    n=d^-(v)\geq 0.5(r-3)+n-k-1-0.5(|N^-(z,A)|-1)+|N^-(z,A)|+1\geq 2n-2k-1.5,
    $$ a contradiction to $2k\leq n-2$.
 
\textbf{Case 2.2.} $A(x_{r-1} \rightarrow  N^-(z, A))\not=\emptyset$. Since $\{z\}\cup N^-(z, A)\subseteq A$ and  $|N^-(z, A)|\geq n-k-1$, we have that $|A|\geq n-k$. Observe that 
$$
 N^-(z,A)\cup N^-(x_{r-1},A)\rightarrow \{x_0,x_1, \ldots , x_{r-3}\}.  \eqno (10)
$$
Hence, $\{z\}\cup N^-(z, A)\rightarrow x_1$, as $r\geq 4$. Therefore, for some $y\in N^-(z, A)$, $x_{r-1}\rightarrow  y\rightarrow x_1$. If $x_ix_r\in A(T)$ with $i\in [1,r-2]$ (respectively, with $i\in [1,r-3]$), then
 $x_0x_{i+1}\notin A(T)$ (respectively, $x_0x_{i+2}\notin A(T)$), for otherwise 
 $Q= x_0x_{i+1}\ldots x_{r-1}yx_1\ldots x_ix_r$ (respectively, $Q= x_0x_{i+2}\ldots x_{r-1}yzx_1\ldots x_ix_r$), which is a contradiction. From this and (8) we have, $$d^-(x_0,\{x_2,x_3,\ldots , x_{r-1}\})\geq n-k-2. \eqno (11)
 $$
 This together with $|A|\geq n-k$ and $A\rightarrow x_0$ implies that $n=d^-(x_0)\geq |A|+d^-(x_0,\{x_2,\\x_3,\ldots , x_{r-1}\})\geq 2n-2k-2$, which in turn implies $2k=n-2$, $|A|=n-k$ (i.e., $A=\{z\}\cup N^-(z, A)$), $r=n\geq 6$, $k\geq 2$ (as $n\geq 5$ and $n$ is even and $r=2n-k-|A|$) and $d^-(x_0,\{x_2,x_3,\ldots , x_{r-1}\})=n-k-2$. Now by the above
arguments, it is not difficult to see that  $N^-(x_r,\{x_1,x_2,\ldots , x_{r-2}\})= 
\{x_{k+1},x_{k+2},\ldots , x_{n-2}\}$.
Since $yx_2\in A(T)$ (by (10) and $r\geq 6$), it follows that if $x_1x_{i+1}\in A(T)$ with $i\in [k+1, n-2]$, then $Q=x_0x_1x_{i+1}\ldots x_{n-1}yx_2\ldots x_ix_n$, a contradiction. We may therefore assume that $d^+(x_1, \{x_{k+2},x_{k+3},\ldots , x_{n-1}\})=0$. This together with $d^+(x_1, \{x_0\}\cup A)=0$ implies that $d^-(x_1)\geq |A|+n-k-1\geq 2n-2k-1$, a contradiction. Claim 2 is proved. \fbox \\\\

From Claim 2 it follows that $d^-(x_{r-1},\{x_0,x_1,\ldots , x_{r-2}\})\geq n-k$. Hence, $r\geq n-k+1$.  Using (11) and Claim 1, we obtain, $d^-(x_0)\geq n-k-2+|A|$. Therefore, $|A|\leq k+2$.

 \textbf{Claim 3:} $|A|\leq k+1$.
 
 \textbf{Proof:} The proof is by contradiction. Suppose that   $|A|\geq k+2$. Then,  $|A|=k+2\geq 2$ and $r=2n-2k-2$ since $|A|\leq k+2$.  This implies that $r$ is even and $r\geq n\geq  6$ since $n\geq 5$ and $2k\leq n-2$. Since $x_{r-1}\rightarrow A\rightarrow x_1$ (Claim 2), it is not difficult to see that if $x_ix_r\in A(T)$ with $i\in [1,r-2]$, then $x_0x_{i+1}\notin A(T)$ (for otherwise, $Q=x_0x_{i+1}\ldots x_{r-1}zx_1\ldots x_ix_r$, where $z\in  A$). This together with $A\rightarrow x_0$, $d^-(x_r, \{x_1,x_2,\ldots , x_{r-2}\})\geq n-k-2$ and $|A|=k+2$ implies that $n=d^-(x_0)\geq |A|+
 d^-(x_r,\{x_1,x_2, \ldots , x_{r-2}\})\geq k+2+n-k-2=n$. This means that $d^-(x_r,\{x_1,x_2, \ldots , x_{r-2}\}=n-k-2$ and the out-neighbors of $x_0$ in $\{x_2,x_3,\ldots , x_{r-1}\}$ are only those vertices $x_i$ for which $x_{i-1}x_r\in A(T)$. From this we conclude that there is no $i\in [1,r-3]$ such that $x_ix_r\in A(T)$ and $x_{i+1}x_r\notin A(T)$ since in the converse case $x_0x_{i+2}\in A(T)$ and $Q=x_0x_{i+2}\ldots x_{r-1}a_1a_2zx_1\ldots x_ix_r$, where $a_1,a_2\in  A$ and $a_1a_2\in A(T)$. Therefore, 
 $N^-(x_r, \{x_1,x_2,\ldots , x_{r-2}\})=\{x_{n-k-1},x_{n-k},\ldots , x_{r-2}\} $  and 
$N^+(x_0, 
\{x_2,x_3,\ldots , x_{r-1}\})= \{x_2,x_3,\ldots , x_{n-k-1}\}$. In particular, $x_rx_1\in A(T)$ as $n\geq 6$ and $n-k\geq 3$.

Assume first that for some $y\in A$, $yx_2\in A(T)$.  Using the above equalities, we obtain $
\{x_{n-k},x_{n-k+1},\ldots , x_{r-1}\}\rightarrow x_1$ (for otherwise, $Q= x_0x_1x_j\ldots  x_{r-1}yx_2\ldots x_{j-1}x_r$, where $j\in [n-k, r-1]$). Now, since $A\cup \{x_0,x_r\}\rightarrow x_1$ and $|A|=k+2$, we obtain $d^-(x_1)\geq n+2$, a contradiction.

Assume next that $A(A\rightarrow x_2)=\emptyset$, i.e., $x_2\rightarrow A$. Then $\{x_2,x_3,\ldots , x_{r}\}\rightarrow A$ and there is a vertex $u\in A$ such that $d^-(u)\geq 2n-2k-3+ (k+1)/2$. Therefore, $2k=n-2$ as 
$2k\leq n-2$. Thus $k\leq 1$, which is a contradiction, since $2k=n-2$ and $n\geq 6$. Claim 3 is proved. \fbox \\\\

Since $r=2n-k-|A|$, $|A|\leq k+1$ and $2k\leq n-2$, we have  $r\geq n+1 \geq 6$ as $n\geq 5$.\\

\textbf{Claim 4:} $N^+(x_2,A)= N^-(x_{r-2},A)=\emptyset$.
 
 \textbf{Proof:} By the digraph duality, it suffices to prove that  $N^+(x_{2}, A)=\emptyset$. Suppose, on the contrary, that $N^+(x_{2}, A)\not=\emptyset$. 
 Let $z$ be a vertex in $N^+(x_{2}, A)$. Then $\{x_2,x_3,\ldots , x_r\}\rightarrow z$.
 This together with $z\rightarrow \{x_0,x_1\}$ implies that $|N^+(z, A)|\geq n-k-2$  and  $|A|\geq n-k-1$. Now using the facts that $|A|\leq k+1$ (Claim 3) and $2k\leq n-2$, we obtain that
$2k=n-2$, $|A|=k+1$ and $r=n+1$. From $2k=n-2$ it follows that $n$ is even, $n\geq 6$ (as $n\geq 5$) and $k\geq 2$.
Since $\{x_0\}\cup A\rightarrow x_1$ and $r=n+1$, it follows that there is a vertex $x_j\in \{x_4,x_5, \ldots , x_{n+1}\}$ such that $x_1x_j\in A(T)$. Therefore, if 
  $x_0x_2\in A(T)$, then  $Q=x_0x_2\ldots x_{j-1}zx_1x_j\ldots x_r$, and if 
 $x_0x_3\in A(T)$, then for some $y\in N^+(z,A)$ we have $Q=x_0x_3\ldots x_{j-1}zyx_1x_j\ldots x_r$,  which is a contradiction. 
 We may therefore assume that $\{x_2,x_3\}\rightarrow x_0$. Since $\{x_2,x_3\}\cup  A\rightarrow x_0$, $\{x_2,x_3,\ldots , x_r\} \rightarrow z$ and $r=n+1$, it follows that   $d^+(x_0,\{x_4,x_5,\ldots , x_{n}\})\geq n-k-2\geq 2$ and $d^+(z,A)=k$. Let $x_0x_m\in A(T)$ with $m\in [4,n]$ and $m$ is the minimum with these properties. Since $A\rightarrow x_1$, $x_{n}z\in A(T)$ and $T\langle A\rangle$ contains a path $zyu$ of length two,  it is easy to see that  $d^-(x_{n+1},\{x_{m-3},x_{m-2},x_{m-1}\})=0$ and if $x_0x_j\in A(T)$ with $j\in [4,n]$, then $x_{j-1}x_{n+1}\notin A(T)$. These together with $d^-(x_{n+1},A)=0$ and $|A|=k+1$ imply that $d^+(x_{n+1})\geq |A|+n-k-2+2= n+1$, a contradiction. Claim 4 is proved. \fbox \\\\
 
 From Claims 1, 2 and  4 it follows that $\{x_{r},x_{r-1},x_{r-2}\}\rightarrow A \rightarrow \{x_0,x_1,x_2\}$.  
 
 Let $\alpha :=\max\{i\in [2,r-3]\, |\, A\rightarrow x_{i}\}$. From this and the assumption that $T$ contains no $Q$ path it follows that 
 $A\rightarrow \{x_0,x_1,\ldots , x_{\alpha}\}$, $ A(x_j\rightarrow A)\not= \emptyset$ for all $j\in [\alpha +1,r]$ 
 and $A$ contains a vertex $z$ such that
 $$
\{x_{\alpha +1}, x_{\alpha +2},\ldots , x_r\}\rightarrow z \rightarrow \{x_0,x_1,\ldots , x_{\alpha}\}. \eqno (12)
$$

 \textbf{Fact 1:} (i) If $2k=n-2$, then $n$ is even (i.e., $n\geq 6$, as $n\geq 5$), $n-k-|A|\geq 1$ and $r\geq n+1\geq 7$. 
 
 (ii) If $2k\leq n-3$, then $n-k-|A|\geq 2$ and $r\geq n+2\geq 7$.
 
\textbf{Proof:} Indeed, $n-k-|A|\geq n-2k-1$ and $r=2n-k-|A|\geq 2n-2k-1$ as $|A|\leq k+1$. Now, if $2k=n-2$, then $n-k-|A|\geq 1$ and $r\geq n+1\geq 7$, and if $2k\leq n-3$, then $n-k-|A|\geq 2$ and $r\geq n+2\geq 7$. \fbox \\
 
 \textbf{Fact 2:}  $n-k-|A|\leq \alpha \leq n-1- \lfloor \frac{1}{2}|A|\rfloor \leq n-1$.

 \textbf{Proof:} Since  $\{x_{\alpha +1},x_{\alpha +2},\ldots ,x_r\}\rightarrow z$, we have $n=d^-(z)\geq r-\alpha =2n-k-|A|-\alpha$ and $\alpha \geq n-k-|A|$. From $A\rightarrow \{x_0,x_1, \ldots, x_{\alpha}\}$ it follows that  there exists a vertex $x\in A$ such that $n=d^+(x)\geq \alpha +1+\lfloor \frac{1}{2}|A|\rfloor$. Hence, $\alpha\leq n-1- \lfloor \frac{1}{2}|A|\rfloor \leq n-1$. \fbox \\\\

Let $B:=\{x_i\mid i\in [1,\alpha -1], \,  \, A(x_i\rightarrow \{x_{\alpha +2}, \ldots , x_r\})\not= \emptyset\}$.

 \textbf{Proposition 1:} (i) If $x_i\in B$, then 
 $A(\{x_0,x_1,\ldots ,  x_{i-1}\}\rightarrow x_{i+1})= \emptyset$. 
 
 (ii) If $x_jx_r\in A(T)$ with $j\in [\alpha,r-2]$, then 
 $A(\{x_0,x_1,\ldots ,  x_{\alpha -1}\}\rightarrow x_{j+1})= \emptyset$.

\textbf{Proof:} (i) $x_i\in B$ means that
 $i\in [1,\alpha -1]$ and there is an integer $b\in  [\alpha +2, r]$ such that $x_ix_b\in A(T)$. Assume that for some $a\in [0,i-1]$, $x_ax_{i+1}\in A(T)$. By (12), $zx_{a+1}, x_{b-1}z\in A(T)$. Therefore, $Q=x_0\ldots x_ax_{i+1}\ldots x_{b-1}zx_{a+1}\ldots x_ix_b\ldots x_r$, a contradiction. So, $A(\{x_0,x_1,\ldots ,  x_{i-1}\}\rightarrow x_{i+1})= \emptyset$. 
 
 (ii) Now assume that $x_jx_r\in A(T)$ with $j\in [\alpha,r-2]$ and  $x_ix_{j+1}\in A(T)$ with $i\in [0,\alpha -1]$. Again using (12), we obtain $Q=x_0x_1\ldots x_ix_{j+1}\ldots x_{r-1}zx_{i+1}\ldots x_j x_r$, a contradiction. \fbox \\\\
 
 Now, we divide the proof of Theorem \ref{th4} into two cases. Note that, by digraph duality and Lemma \ref{le3}, the second case easily follows from the first case.\\
 
\textbf{Case I.} $A(\{x_1,x_2,\ldots ,  x_{\alpha -1}\}\rightarrow x_{r})= \emptyset$, i.e., $x_r\rightarrow \{x_1,x_2,\ldots ,  x_{\alpha -1}\}$.

For this case first we need to show the following claims below (Claims 5-13).

\textbf{Claim 5:} $B\not= \{x_1,x_2,\ldots ,  x_{\alpha -1}\}$.

\textbf{Proof:} Suppose, on the contrary, that 
 $B= \{x_1,x_2,\ldots ,  x_{\alpha -1}\}$. Let $\alpha \geq 3$. Then using Proposition 1(i), we see that $d^+(x_1,\{x_0,x_3,\ldots ,  x_{\alpha },x_r\})=0$.
From $d^-(x_r, V(P))\geq n-k$ and $x_r\rightarrow \{x_1,x_2,\ldots ,  x_{\alpha -1}\}$ it follows that $d^-(x_r, \{x_{\alpha},x_{\alpha +1},\ldots , x_{r-2}\})\geq n-k-2$. 

By running $j$ from $\alpha$ to $r-2$ in Proposition 1(ii), we have $$d^-(x_r,\{x_{\alpha},x_{\alpha +1}, \ldots , x_{r-2}\}) +
d^+(x_1,\{x_{\alpha +1},x_{\alpha +2}, \ldots , x_{r-1}\})\le r-1-\alpha.$$ Therefore,
\begin{eqnarray*}
d^-(x_1,\{x_{\alpha+1},x_{\alpha +2}, \ldots , x_{r-1}\})&=& r-1-\alpha - d^-(x_1,\{x_{\alpha+1},x_{\alpha +2}, \ldots , x_{r-1}\})\\
&\ge & d^-(x_r,\{x_{\alpha},x_{\alpha +1}, \ldots , x_{r-2}\})\\
&\ge & n-k-2.
\end{eqnarray*}
Hence, by this inequality and $\alpha\geq n-k-|A|$ (Fact 2) we have 
 $$
 n=d^-(x_1)\geq |A|+|\{x_0,x_3,\ldots ,  x_{\alpha},x_r\}|+
 d^-(x_1,\{x_{\alpha +1},x_{\alpha +2}, \ldots , x_{r-2}\})$$ $$\geq |A|+\alpha+n-k-2 \geq 2n-2k-2. 
$$
Let now $\alpha=2$. From $2=\alpha \geq n-k-|A|$ it follows that  $|A|\geq n-k-2$. Therefore,
$$
 n=d^-(x_1)\geq |A|+|\{x_0,x_r\}|+
 d^-(x_1,\{x_{\alpha +1},x_{\alpha +2}, \ldots , x_{r-2}\})$$ $$\geq |A|+2+n-k-2 \geq 2n-2k-2. 
$$
Since $k=|S|\le (n-2)/2$, in both cases we have equalities, that is, $2k=n-2,$
$k\geq 2$, $\alpha =n-k-|A|$ and $d^-(x_r,\{x_{\alpha},x_{\alpha +1},\ldots , x_{r-2}\})=n-k-2$. From these we obtain that  $|\{x_{\alpha +1},x_{\alpha +2},\ldots , x_{r}\}|=r-\alpha =2n-k-|A|-(n-k-|A|)=n$ and if $x_lx_r\notin A(T)$ with $l\in [\alpha, r-2]$, then $x_1x_{l+1}\in A(T)$. 
 Since   $A(x_{\alpha}\rightarrow \{x_{\alpha -1}\}\cup A)=\emptyset$, there exists $j\in [\alpha +2,r]$ such that
 $x_{\alpha}x_j\in A(T)$. If $x_{\alpha}x_r\notin A(T)$, then $j\leq r-1$, $x_1x_{\alpha +1}\in A(T)$ (by the above observation) and $Q=x_0x_1x_{\alpha +1}\ldots x_{j-1}zx_2\ldots x_{\alpha}x_j\ldots x_r$, a contradiction. We may therefore assume that $x_{\alpha}x_r\in A(T)$. Since
  $d^-(x_r, \{x_{\alpha},x_{\alpha +1},\ldots , x_{r-2}\})$ $=n-k-2$ and  $|\{x_{\alpha},x_{\alpha +1},\ldots , x_{r-4}\}|=n-3\geq n-k-2$, it follows that there is an integer $i\in [\alpha, r-4]$ such that $x_ix_r\in A(T)$ and 
$x_{i+1}x_r\notin A(T)$. By our observation,  $x_1x_{i+2}\in A(T)$. Therefore, if  $x_{i+1}x_{r-1}\in A(T)$, then $Q=x_0x_1x_{i +2}\ldots x_{r-2}zx_2\ldots x_{i+1}x_{r-1} x_r$, and if  $x_{r-1}x_{i+1}\in A(T)$, then $Q=x_0x_1x_{i +2}\ldots x_{r-1}x_{i+1}zx_2\ldots x_{i} x_r$. In both cases we have a contradiction. Claim 5 is proved. \fbox \\\\
 
 From now on, by $T^1$ we denote the subtournament $T\langle\{x_1,x_2,\ldots , x_{\alpha -1}\}\rangle$.
 
 \textbf{Claim 6:} $B\not= \emptyset$.

\textbf{Proof:} By contradiction, suppose that $B= \emptyset$. This means that $A(\{x_1,x_2,\ldots , x_{\alpha -1}\}$ $\rightarrow \{x_{\alpha +2},x_{\alpha +3}, \ldots ,x_r\})=\emptyset$, i.e.,
 for any $i\in [1,\alpha -1]$, $d^-(x_i, 
\{x_{\alpha +2}, x_{\alpha +3}, \ldots , x_r\})= 2n-k-|A|-\alpha -1$.

\textbf{Case 6.1.} The subtournament $T^1$ is not  regular.  Then by Lemma 3.1(iii), there is a vertex $x\in V(T^1)$ such that $d^-(x,V(T^1))\geq 0.5(\alpha -1)$. Therefore, since $r=2n-k-|A|$ and $\alpha \leq n-\lfloor 0.5|A|\rfloor-1$, we have

 $n=d^-(x)\geq |A|+|\{x_{\alpha +2}, x_{\alpha +3}, \ldots , x_r\}|+0.5(\alpha -1)\geq  n+0.5(n-2k-2+\lfloor 0.5|A|\rfloor)$.

Since $2k\leq n-2$ and $|A|\geq 1$, we have that all  inequalities in the last inequality in fact are equalities, i.e., $|A|=1$, $2k=n-2$ ($n$ is even) and $\alpha =n-0.5\lfloor|A|\rfloor-1=n-1$. If $x_ix_r\in A(T)$ with $i\in [\alpha, r-2]$, then, since $\alpha =n-1$ and $|A|=1$, we have $d^+(x_{i+1})\geq |A|+|\{x_0,x_1,\ldots , x_{\alpha -1},x_{i+2}\}|\geq n+1$, a contradiction. We may therefore assume that $x_r\rightarrow \{x_{\alpha},x_{\alpha+1}, \ldots , x_{r-2}\}$. This together with  $x_r\rightarrow A\cup \{x_{1},x_{2}, \ldots , x_{\alpha -1}\}$ and $2k= n-2$ implies that $d^+(x_r)\geq 2n-k-2\geq n+1$, a contradiction.

\textbf{Case 6.2.} The subtournament $T^1$ is regular. Then $d^-(x_1,V(T^1))=0.5(\alpha -2)$. Since $B=\emptyset$, $2k\leq n-2$ and $r=2n-k-|A|$, we have that  $n=d^-(x_1)\geq |A|+|\{x_0,x_{\alpha +2}, x_{\alpha +3}, \ldots , x_r\}|+0.5(\alpha -2)\geq  n+0.5(n-\alpha)$, a contradiction since $\alpha \leq n-1$. Claim 6 is proved. \fbox \\\\

By Claim 4,  $\alpha \geq 2$. If $\alpha = 2$, then $B=\emptyset$ or $B=\{x_1\}=\{x_1,\ldots , x_{\alpha -1}\}$, violating Claim 5 or Claim 6. Therefore, $\alpha \geq 3$ and $A\rightarrow \{x_0,x_1,x_2,x_3\}$.  By the digraph duality, $\{x_{r-3},x_{r-2},x_{r-1},x_{r}\}\rightarrow A$. \\

\textbf{Claim 7:} $|B|\geq 2n-2k-\alpha -3$.

\textbf{Proof:} By Claim 5, $\{x_1,x_2,\ldots , x_{\alpha -1}\}\setminus B\not= \emptyset$.

Assume first that the subtournament $T^2:=T\langle\{x_1,x_2,\ldots , x_{\alpha -1}\}\setminus B\rangle$ is not  regular.  By Lemma \ref{le1}(iii), $V(T^2)$ contains a vertex $x$ such that $d^-(x,V(T^2))\geq 0.5|V(T^2)|$. On the other hand, from the definition of $B$ it follows that $A\cup \{x_{\alpha +2}, x_{\alpha +3},\ldots , x_r\}\rightarrow x$. Therefore,
$$
n=d^-(x)\geq |A|+r-\alpha -1 +0.5(\alpha -1-|B|)= n+0.5(2n-2k-\alpha -3 -|B|).
$$
Hence,  $|B|\geq 2n-2k-\alpha -3$.
  
  Assume next that $T^2$ is  regular. Then for all $x_i\in V(T^2)$,  $d^-(x_i,V(T^2))=0.5(\alpha -|B|-2)$. Let $x_q\in V(T^2)$ and $q$ is the minimum with this property. Then
$$
n=d^-(x_q)\geq |A\cup \{x_{q-1},x_{\alpha +2}, x_{\alpha +3},\ldots , x_r\}|+0.5(\alpha -|B|-2)$$ $$= n+0.5(2n-2k-\alpha -2 -|B|).
$$
Hence,  $|B|\geq 2n-2k-\alpha -2$. Claim 7 is proved. \fbox \\\\

Let  $M:=\{x_j\, | \, j\in [\alpha +2,r-1] \, \hbox{and} \, A(\{x_0,x_1,\ldots , x_{\alpha -1}\}\rightarrow x_j)\not= \emptyset\}$.

\textbf{Proposition 2:} If $x_j\in M$, then $x_{j-1}x_r\notin A(T)$.

\textbf{Proof:} If $x_j\in M$ and $x_{j-1}x_r\in A(T)$,  by the definition of $M$ there is a vertex $x_i$ with $i\in [0,\alpha -1]$ such that $x_ix_j\in A(T)$. Therefore,  $Q=x_0x_1\ldots x_ix_j\ldots x_{r-1}zx_{i+1}\ldots x_{j-1}x_r$, a contradiction. \fbox \\

\textbf{Claim 8:} $M\not= \{x_{\alpha +2},x_{\alpha +3},\ldots , x_{r-1}\}$.

\textbf{Proof:} Suppose, on the contrary, that
 $M = \{x_{\alpha +2},x_{\alpha +3},\ldots , x_{r-1}\}$. From $d^-(x_r,$ $\{x_1,x_2,\ldots , x_{\alpha -1}\})=0$ (by the condition of Case I) and Proposition 2 it follows that
$$
n=d^+(x_r)\geq |A|+|\{x_1,x_2,\ldots , x_{\alpha -1}, x_{\alpha +1},x_{\alpha +2},\ldots , x_{r-2}\}|=2n-k-3.
$$
Hence, $n-k\leq 3$. Since $2k\leq n-2$, we obtain that $n\leq 4$, which contradicts that $n\geq 5$. Claim 8 is proved. \fbox \\

From Claim 8 it follows that $\{x_{\alpha +2},x_{\alpha +3},\ldots , x_{r-1}\}\setminus M \not= \emptyset$. 

\textbf{Claim 9:} $2|M|\geq 2n-2k-\alpha -3$.

\textbf{Proof:} Assume first that the subtournament $T^1$ is not regular. Then $V(T^1)$ contains a vertex $x$ such that $d^-(x,V(T^1))\geq 0.5(\alpha -1)$. This together with $A\cup (\{x_{\alpha +2},x_{\alpha +3},\ldots , x_{r}\}\setminus M)\rightarrow x$ and $r=2n-k-|A|$ implies that 
$$
n=d^-(x)\geq |A|+0.5(\alpha -1)+r-\alpha -1-|M|=n+0.5(2n-2k-2|M|-\alpha -3).
$$
Hence, $2|M|\geq 2n-2k-\alpha -3$. 
Assume next that $T^1$ is regular. Then, $d^-(x_1,V(T^1))=0.5(\alpha -2)$. Now by a similar argument as above, we obtain
$$
n=d^-(x_1)\geq |A|+ |\{x_0\}|+0.5(\alpha -2)+r-\alpha -1-|M|=n+0.5(2n-2k-2|M|-\alpha -2).
$$
Hence, $2|M|\geq 2n-2k-\alpha -2$. Claim 9 is proved. \fbox \\\\

From Claim 9, $2k\leq n-2$ and 
$\alpha \leq n-
\lfloor 0.5|A|\rfloor -1$ it follows that  $2|M|\geq 2n-2k-\alpha -3\geq \lfloor 0.5|A|\rfloor$. Hence, if $|M|=0$ (i.e., $M=\emptyset$), then $2k=n-2$, $|A|=1$ and $\alpha = n-
\lfloor 0.5|A|\rfloor -1=n-1$. In particular, $|M|=0$ means that  $x_{r-1}\rightarrow \{x_0,x_1, \ldots , x_{\alpha -1}\}$. This together with $x_{r-1}\rightarrow \{x_r\}\cup A$  implies that  $d^+(x_{r-1})\geq n+1$, which is a contradiction. Therefore, $M\not=\emptyset$.\\

Let $W:=\{x_i \,\, \hbox{with} \,\, i\in [\alpha +1,r-1]\, | \, \hbox{there exists} \,\, j\in [\alpha, i-1]\,\, \hbox{such that} \, x_jx_r\in A(T) $ $ \hbox{and} \,\, |\{x_{j+1},\ldots , x_i\}|\leq \lfloor 0.5|\{x_{j+1}, \ldots , x_r\}|\rfloor \}$.

 By Lemma \ref{le4}, $A(\{x_0,x_1,\ldots ,x_{\alpha -1}\}\rightarrow W)=\emptyset$, i.e., $W\rightarrow \{x_0,x_1,\ldots , x_{\alpha-1}\}$.
 
  Let $x_{\alpha +s}x_r\in A(T)$, $s\geq 0$ and $s$ is the minimum with this property.\\

\textbf{Claim 10:} $|W|\geq n-k-2+\lfloor 0.5(n-|A|-\alpha -s+1)\rfloor$.

\textbf{Proof:} 
 Under the condition of Case I, we have that $x_r\rightarrow \{x_1,x_2,\ldots , x_{\alpha -1}\}$ and $d^-(x_r,\{x_{\alpha +s},x_{\alpha +s+1},\ldots , x_{r-2}\})\geq n-k-2\geq 2$.
Without loss of generality, we may assume that 
$\{x_{\alpha+s+1},x_{\alpha+s+2},\ldots , x_{\alpha+s+m},  x_{\alpha+s+m+1}, \ldots , x_{\alpha+s+m+t}\}\subseteq W$, where $m\geq 0$, $x_{\alpha+s+m}x_r\in A(T)$,   $x_{\alpha+s+m+1}x_r\notin A(T)$, $x_{\alpha+s+m+t+1}\notin W$ and $t=\lfloor(r-m-\alpha-s)/2\rfloor$. Let $Y:=\{x_{i+1} \, | \, i\in [\alpha+s+m+t+1,r-2],\, x_ix_r\in A(T)\}$. It is not difficult to see that $Y\subseteq W$, $m\geq n-k-3-|Y|$ and $|W|\geq |Y|+m+t$. Since $r=2n-k-|A|$, it is not hard to check that
$$
|W|\geq \lfloor  \frac{2|Y|+m+r-\alpha-s}{2}\rfloor\geq n-k-2+ \lfloor  \frac{n-|A|+|Y|-\alpha-s+1}{2}\rfloor.
$$
Therefore,  Claim 10 is true, since $|Y|\geq 0$.
 \fbox \\
 
We know  that
 $$
 \{x_r\}\cup A\cup W\rightarrow \{x_1,x_2, \ldots , x_{\alpha -1}\}. \eqno (13)
 $$
 
 \textbf{Claim 11:} $x_{\alpha}x_r\notin A(T)$, i.e., $s\geq 1$.
 
 \textbf{Proof:} By contradiction, suppose that $x_{\alpha}x_r\in A(T)$, i.e., $\alpha=0$. 
 
 \textbf{Case 11.1}. The subtournament $T^1$ is regular. Using Claim 10 (when $s=0$) and $2k\leq n-2$, we obtain 
 $
 d^-(x_1)\geq |\{x_0,x_r\}\cup A\cup W|+0.5(\alpha-2)\geq 0.5(3n-2k-2+|A|)> n
 $,
 which is a contradiction.
 
\textbf{Case 11.2}. The subtournament $T^1$ is not regular. If $d^-(x_j,V(T^1))\geq \alpha/2$ for some $x_j\in V(T^1)$, then using (13), Claim 10 and $2k\leq n-2$, we obtain 
$$
d(x_j)\geq |\{x_r\}\cup A\cup W|+\alpha/2\geq |A|+1+\alpha/2+n-k-2+0.5(n-|A|-\alpha)$$
$$
=0.5(3n-2k+|A|-2)>n,
$$
a contradiction. We may therefore assume that 
 for all 
$x_i\in V(T^1)$, $d^-(x_i, V(T^1))< \alpha/2$. Therefore by Lemma \ref{le1}(v), $T^1$ is almost regular. Then for all $x_i\in V(T^1)$, $d^-(x_i, V(T^1))\geq (\alpha-3)/2$.  Again using (13), we obtain 
$$
n= d^-(x_1)\geq |\{x_0,x_r\}\cup A\cup W|+0.5(\alpha-3)\geq n+ 0.5(n-2k-3+|A|).
 $$
Hence, $2k=n-2$, $|A|=1$ ($A=\{z\}$), $d^-(x_1,V(T^1))=(\alpha-3)/2$, $N^-(x_1)=\{x_0,x_r\}\cup A\cup W  \cup N^-(x_1,V(T^1))$. Since $\alpha \geq 3$ and $x_\alpha \notin W$, we have that $x_1x_{\alpha}\in A(T)$. 
If $x_{\alpha-1}x_j\in A(T)$ with $j\in [\alpha+2,r]$, then 
$Q=x_0x_1x_\alpha\ldots x_{j-1}zx_2\ldots x_{\alpha-1}x_j\ldots x_r$, if $x_{\alpha-1}x_{\alpha +1}\in A(T)$, then 
$Q=x_0x_1\ldots x_{\alpha-1}x_{\alpha+1}\ldots  x_{r-1}z x_{\alpha} x_r$. Thus, in both cases we have a contradiction. Therefore, $\{x_{\alpha+1},x_{\alpha+2}, \ldots , x_{r}\}\rightarrow x_{\alpha-1}$. Now, since $|A|=1$ and $2k=n-2$, we have
$$
n= d^-(x_{\alpha-1})\geq |\{z\}|+|\{x_{\alpha+1},x_{\alpha+2}, \ldots , x_{r}\}|+d^-(x_{\alpha -1},V(T^1))\geq n+(n-\alpha-1)/2.
 $$
Therefore, $\alpha=n-1$ since $\alpha \leq n-1$. Then, 
 $x_{\alpha+1}\rightarrow \{z,x_{\alpha+2}, x_0,x_1,\ldots , x_{\alpha-1}\}$ by Proposition 1(ii). This means that $d^+(x_{\alpha+1})\geq n+1$, a contradiction. Claim 11 is proved. \fbox \\\\

Let $F:=\{x_i \, | \, i\in [\alpha+1, \alpha+s] \,\,\hbox{and}\,\, d^-(x_i,\{x_1,x_2, \ldots , x_{
\alpha -1}\})\geq 1\}$.

 Then, $\{x_{\alpha +1}, \ldots , x_{\alpha +s}\} \setminus F\rightarrow \{x_1,\ldots , x_{\alpha -1}\}$.

\textbf{Claim 12:} $|F|\geq \lfloor 0.5(s+1)\rfloor$.

\textbf{Proof:} Recall that $x_{\alpha +s}x_r\in A(T)$, $s\geq 1$ and $x_r\rightarrow \{x_1,\ldots , x_{\alpha -1}\}$.  Suppose, on the contrary, that 
 $|F| < \lfloor 0.5(s+1)\rfloor$. Then $|\{x_{\alpha +1}, \ldots ,x_{\alpha +s}\}\setminus F|\geq \frac{1}{2}(s+1)$  (for this it suffices to consider when $s$ is even or not). Therefore, for every vertex  $x_i\in \{x_1,x_2, \ldots , x_{\alpha -1}\}$, $d^-(x_i,
 \{x_{\alpha +1}, \ldots ,x_{\alpha +s}\})\geq \frac{1}{2}(s+1)$. 
  Assume first that the subtournament $T^1$ is not regular. Then $V(T^1)$ contains a vertex $x$ such that $d^-(x,V(T^1))\geq 0.5(\alpha -1)$. Now using Claim 10, $2k\leq n-2$ and the fact that $\{x_r\}\cup A\cup  W\rightarrow x$, we obtain
$$ 
d^-(x)\geq |A\cup \{x_r\}\cup W|+0.5(\alpha -1)+0.5(s+1)\geq n+0.5|A|,
$$ 
a contradiction. 
Assume next that the subtournament $T^1$  is regular. Then $d^-(x_1,V(T^1))\\=0.5(\alpha -2)$. Therefore, as the above, we obtain
 $$
 d^-(x)\geq |A\cup \{x_0,x_r\}\cup W|+0.5(\alpha -2)+0.5(s+1)\geq n+0.5(|A|+1),
 $$
 a contradiction. Claim 12 is proved. \fbox \\
 
 \textbf{Claim 13:} $A(A\rightarrow x_{\alpha+s+1})\not= \emptyset$.
 
 \textbf{Proof:}  Suppose, on the contrary, that $A(A\rightarrow x_{\alpha+s+1})= \emptyset$. Then 
 $\{x_{\alpha+s+1}, \ldots , x_r\}\rightarrow A$. Since $W\subseteq \{x_{\alpha+s+1}, \ldots , x_{r-1}\}$ it follows that $W\rightarrow A$. Recall that $x_{\alpha+s}x_r\in A(T)$, $s\geq 1$ and $\alpha+s$ is the minimum with this property. By Proposition 1(i), we know that if $x_i\in B$ with $i\geq 2$, then $x_{i+1}x_1\in A(T)$  and $x_{i+1}\notin W$. By the definition of $F$,  $\{x_{\alpha+1}, \ldots , x_{\alpha+s}\}\setminus F\rightarrow x_1$. Now using Claims 7 and 10,  we obtain
 $$
 n=d^-(x_1)\geq |A|+|B|-1+|W|+|\{x_0,x_r\}|+|\{x_{\alpha+1}, \ldots , x_{\alpha+s}\}\setminus F|
 $$
 $$
 \geq |A|+2n-2k-\alpha-4+n-k-2+0.5(n-|A|-\alpha-s)+2+s-|F|
 $$
$$
= n+0.5(|A|+s-2|F|+5n-6k-3\alpha -8).
$$
Therefore,
$0\geq |A|+s-2|F|+5n-6k-3\alpha -8$.

Now we consider the set $W$. We know that $W\rightarrow \{x_0,x_1,\ldots , x_{\alpha-1}\}$ (by Lemma \ref{le4}). Note that if $x_i\in W$, then $i\geq \alpha+s+1$ and $x_{i-1}z\in A(T)$. Using this, it is not difficult to show  that if $x_j\in F$ and $x_i\in W$, then  $x_{i}x_{j-1}\in A(T)$.
 Thus we have, if $x_j\in F$, then $W\rightarrow x_{j-1}$. If $T\langle W\rangle$ is not regular, then for some $x\in W$, $d^+(x, W)\geq 0.5|W|$, if  $T\langle W\rangle$ is  regular, then for  $x_l\in W$ with the maximum index we have,
 $d^+(x_l, W\cup \{x_{l+1}\})\geq 0.5(|W|+1)$.   In both cases, for some $x_j\in W$,  $d^+(x_j, W\cup \{x_{l+1}\})\geq 0.5|W|$. Therefore there is a vertex $x_j\in W$ such that
 $$
 n=d^+(x_j)\geq |A|+|F|+0.5|W|+|\{x_0,x_1,\ldots , x_{\alpha-1}\}|
 $$
 $$
 \geq |A|+|F|+\alpha +0.5(n-k-2+0.5(n-|A|-\alpha -s))
 $$
$$
=(3|A|+3\alpha +4|F|+3n-2k-4-s)/4.
$$
Therefore,
$0\geq 3|A|+3\alpha +4|F|-n-2k-4-s$. 
Together with $0\ge |A|+s-2|F|+5n-6k-3\alpha-8$ and $n\ge 2k+2$, we obtain $0\ge 2|A|+|F|-2,$ which is a contradiction since $|A|\ge 1, |F|\ge 1$, and thus $2|A|+|F|-2\ge 1.$ Claim 13 is proved. \fbox \\\\


 Now we are ready to complete the discussion of Case I. 
 Let $q:=max\{i\, | \, A(A\rightarrow x_i)\not= \emptyset\}$. Let now $zx_q\in A(A\rightarrow x_q)$. Then $z\rightarrow \{x_0,x_1,\ldots ,x_q\}$ and $q\leq n-1$. Note that $3\leq \alpha \leq q\leq r-4$ and $\{x_{q+1},x_{q+2},\ldots , x_r\} \rightarrow A$. In this case (Case I) we have $A(\{x_1,x_2,\ldots , x_\alpha\}\rightarrow x_r)=\emptyset$ as $s\geq 1$. From $A(A\rightarrow x_{\alpha+s+1})
 \not= \emptyset$ it follows that $q\geq \alpha+s+1$. Observe that $|A|\geq 2$ since $\alpha+s > \alpha$, $A(A\rightarrow x_{\alpha+s})\not= \emptyset$ and $A(x_{\alpha +s}\rightarrow A)\not= \emptyset$. From $q\geq \alpha +s+1\geq \alpha +2$ and $x_{\alpha+s}x_r\in A(T)$  it follows that there is an integer $p\in [\alpha+s,q-1]$ such that $x_px_r\in A(T)$. Let $p$ be the maximum with this property. Now using the first part of Lemma \ref{le3}, we obtain 
 $
 A(\{x_0,x_1,\ldots , x_{p-2}\}\rightarrow \{x_{q+2},x_{q+3}, \ldots ,x_{r-1}\})=\emptyset, 
 $
 which in turn implies that (since $p-2\geq \alpha-1$)
 $$
 A(\{x_0,x_1,\ldots , x_{\alpha-1}\}\rightarrow \{x_{q+2},x_{q+3}, \ldots ,x_{r-1}\})=\emptyset. \eqno (14)
 $$
 Let $m:=max\{i\in [1,r-2]\, | \, x_ix_r\in A(T)\}$ and consider two cases depending on $m$.
 
 \textbf{Case I.1.} $m\not= p$. Then $m\geq q$ (since $p< q$). Using the facts that $\{x_{q+1},x_{q+2}, \ldots , x_r\}\\ \rightarrow A$ and $z\rightarrow \{x_0,x_1, \ldots , x_q\}$,  we obtain $x_{m+1}\rightarrow \{x_0,x_1,\ldots , x_{q-1}\}$ (for otherwise, $x_jx_{m+1}\in A(T)$ with $j\in [0,q-1]$ and $Q=x_0x_1\ldots x_jx_{m+1}\ldots x_{r-1}zx_{j+1}\ldots  x_mx_r$, a contradiction). Therefore, 
 $$
n=d^+(x_{m+1})\geq |A|+|\{x_0,x_1,\ldots , x_{q-1}, x_{m+2}\}|=|A|+q+1.
 $$
Hence, $q\leq n-|A|-1$, which in turn implies that
$
|\{x_{q+2},x_{q+3}, \ldots , x_{r-1}\}|=r-q-2\geq n-k-1.
$ 
 By the definition of the set $M$ and (14), we have that $M \subseteq  \{x_{\alpha+2}, x_{\alpha+3}, \ldots , x_{q+1}\}$.  Let $x_{j+1}\in M$. Then $\alpha+1\leq j\leq q$, and by Proposition 2, for all $i\in [q+2,r]$, $x_jx_i\notin A(T)$ (for otherwise, if $x_jx_i\in A(T)$ for some $i\in [q+2,r]$, then by the definition of $M$ there is a vertex $x_g$ with $g\in [0,\alpha-1]$ such that $x_gx_{j+1}\in A(T)$, and hence $Q=x_0x_1\ldots x_gx_{j+1}\ldots x_{i-1}zx_{g+1}\ldots x_jx_i\ldots x_r$, a contradiction). Therefore, $\{x_{q+2},x_{q+3}, \ldots ,\\x_{r-1}\}$ contains a vertex $x$ such that $n=d^+(x)\geq |A|+|\{x_0,x_1,\ldots , x_{\alpha-1}\}|+|M|+0.5(r-q-3)$. Now using Claim 9 and the facts that $r-q-2\geq n-k-1$ and $\alpha \geq n-k-|A|$, we obtain
$$
n=d^+(x)\geq |A|+\alpha+ 0.5(2n-2k-\alpha -3)+0.5(n-k-2)$$ $$\geq n+0.5(|A|+2n-4k-5)\geq n+0.5(|A|-1),
$$
which is a contradiction since $|A|\geq 2$.
 
 \textbf{Case I.2.} $m=p<q$.  
Recall that $s\geq 1$ and $d^-(x_r,\{x_{\alpha+s}, x_{\alpha+s+1},\ldots , x_{m-2}\})\geq n-k-4\geq 0$. By the first part of Lemma \ref{le3} (when in Lemma \ref{le3}, $s=m$ and $t=r$) we have 
 $$
 A(\{x_{0},x_{1},\ldots , x_{m-2}\}\rightarrow  \{x_{q+2},x_{q+3},\ldots ,x_{r-1}\})=\emptyset. \eqno (15)
 $$
It is easy to see that $|\{x_{0},x_{1},\ldots , x_{m-2}\}|\geq n-k+\alpha -3$ since $d^-(x_r,\{x_1,x_2,\ldots , x_{\alpha}\})=0$. We have that $|\{x_{q+1},\ldots , x_{r-1}\}|\geq 2n-k-|A|-q-1\geq n-k-|A|$ as $q\leq n-1$. Since $\alpha+s+1\leq m\leq q-1$ and $|A|\geq 2$, it follows that for some $y\in A\setminus \{z\}$, $x_{\alpha+s}y\in A(T)$. If $q=n-1$, then $yz\in A(T)$. Therefore, $Q=x_0x_1\ldots x_{\alpha+s}yzx_{\alpha+s+2}\ldots x_r$, a contradiction. From now on, assume that $q\leq n-2$. Then $|\{x_{q+2},\ldots ,x_{r-1}\}|\geq n-k-|A|$.

Assume first that the subtournament $T^3:=T\langle \{x_{q+2},\ldots , x_{r-1}\}\rangle$ is regular. Then, $d^+(x_{r-1},V(T^3))\geq 0.5(n-k-|A|-1)$. This together with $m\geq n-k+\alpha -2$, (15) and $\alpha \geq n-k-|A|$ implies that
$$
n=d^+(x_{r-1})\geq |A|+|\{x_0,x_1,\ldots , x_{m-2},x_r\}|+0.5(n-k-|A|-1)$$ 
$$=|A|+m+0.5(n-k-|A|-1)\geq |A|+n-k+\alpha-2+0.5(n-k-|A|-1)$$
$$\geq |A|+n-k-2+n-k-|A|+0.5(n-k-|A|-1) $$
$$
=2n-2k-2+0.5(n-k-|A|-1). 
$$
If $|A|\leq k$ or $2k\leq n-3$, then  the last inequality gives a contradiction. We may therefore assume that $2k=n-2$ and $|A|=k+1$. Then from $\alpha\geq 3$ and $\alpha \geq n-k-|A|$ it follows that  $\alpha\geq n-k-|A|+2$. Now it is not difficult to check that $d^+(x_{r-1})\geq n-k+\alpha -2+|A|+0.5(n-k-|A|-1)\geq n+2$, which is a contradiction.

Assume next that the subtournament $T^3$ is not regular. Then 
$V(T^3)$ contains a vertex $x$ such that $d^+(x,V(T^3))\geq 0.5(n-k-|A|)$. Hence by (15),  
$$
d^+(x)\geq n-k+\alpha-3+|A|+0.5(n-k-|A|). \eqno (16)
$$ 
 If $2k\leq 2n-3$, then using the last inequality, $|A|\leq k+1$ and $\alpha \geq n-k-|A|$,  we obtain that $d^+(x)\geq n+1$, which is a contradiction. We may therefore assume that $2k=n-2$. Observe that when $2k=n-2$, then from 
 $\alpha \geq 3$ and  $\alpha \geq n-k-|A|$ it follows that if $|A|=k$, then $\alpha \geq n-k-|A|+1$, and if $|A|=k+1$, then $\alpha \geq n-k-|A|+2$. If $|A|\leq k-1$, then using (16), $2k=n-2$ and $\alpha \geq n-k-|A|$, we obtain $d^+(x)> n$, a contradiction. We may therefore assume that $k\leq |A|\leq k+1$. Then by the above observation, $\alpha\geq n-k-|A|+1$, which together with (16) implies that  $d^+(x)> n$, which is a contradiction. 
   The discussion of Case I.2 is completed.\\

\textbf{Case II.} $A(\{x_1,x_2,\ldots ,  x_{\alpha -1}\}\rightarrow x_{r})\not= \emptyset$.  Let $x_px_r\in  A(T)$ with $p\in [1,\alpha -1]$. If $A(x_0\rightarrow
 \{x_{\alpha+2},x_{\alpha+3},\ldots ,  x_{r-1}\})= \emptyset$, then by considering the
converse tournament of $T$ we reduce the case to Case I since for some $\beta\in [\alpha+1,r-3]$, $\{x_{\beta+1},x_{\beta+2}, \ldots , x_{r}\}\rightarrow A$.
By the digraph duality, we may assume that $A(x_0\rightarrow \{x_{\alpha+2},x_{\alpha+3},\ldots ,  x_{r-1}\})\not= \emptyset$. Let $x_0x_i\in A(T)$ with $i\in [\alpha+2,r-1]$.  From (12) and the first part of Lemma \ref{le3} (when in Lemma \ref{le3}, $s=p$ and $t=r$) it follows that $p=1$. Since $r\geq 7$ (Fact 1), we have that $t-s\geq 6$. Thus, $x_1x_r\in A(T)$ and $x_0x_i\in A(T)$ with $i\in [\alpha+2,r-1]$, which contradicts the second part of Lemma \ref{le3}. This contradiction completes the proof of Theorem \ref{th4}. \end{proof}

\noindent\textbf{Remark 4:} The following example of a tournament of order $2n+1=9$ shows that Theorem \ref{th4} for $n=4$ is not true. Consider a  tournament $H$ such that 
$V(H)=\{x,y,u,v,z\}\cup A\cup B$, where $A=\{a_1,a_2\}$ and     $B=\{b_1,b_2\}$,  with the properties 
$x\rightarrow \{u,v,z\} \rightarrow y$, $B\rightarrow \{u,v\} \rightarrow A$, $y\rightarrow A\cup B \rightarrow x$,
$A\rightarrow z \rightarrow B$ and the following $xy, uv, vz, zu, a_1a_2, b_1b_2, a_1b_1$, $a_2b_2, b_2a_1, a_2b_1$
arcs are in $A(H)$.

It is easy to see that $H-\{z\}$ contains an $(x,y)$-path of length 3, but contains no $(x,y)$-path of length 4.

\vspace{0.5cm}

\noindent{\bf Acknowlegement} We are very grateful to the referees for their careful reading of the manuscript and a large number of useful suggestions.

\end{document}